\documentclass[12pt,a4paper]{article} %ГришаТамасян
\usepackage[cp866]{inputenc}
\usepackage[english]{babel}
\usepackage{amsmath}
\usepackage{amsfonts}
\usepackage{longtable}
\usepackage{amssymb}
\usepackage{graphics}
\newcommand{\bc}{\begin{center}}
\newcommand{\br}{\begin{right}}
\newcommand{\ec}{\end{center}}
\newcommand{\be}{\begin{equation}}
\newcommand{\ee}{\end{equation}}

\newcommand{\vl}{\mid}

\newcommand{\rar}{\rightarrow}
\newcommand{\grad}{\nabla}

\newcommand{\p}{\partial}

\newcommand{\bco}{\mbox{co} \,}

\newcommand{\blim}{\mbox{lim} \,}
\newcommand{\bsup}{\mbox{sup} \,}

\newcommand{\binf}{\mbox{inf} \,}
\newcommand{\exs}{\exists}

\newcommand{\meq}{\geq}

\newcommand{\la}{\lambda}
\newcommand{\La}{\Lambda}
\newcommand{\al}{\alpha}

\newcommand{\del}{\delta}

\newcommand{\eps}{\varepsilon}

\newtheorem{thm}{Theorem}[section]

\newtheorem{rem}{Remark}[section]
\newtheorem{defi}{Definition}[section]
\newtheorem{cor}{Corollary}[section]

\newtheorem{conseq}{Consequence}[section]

\begin{document}

AMS 517.977

\vspace{1cm}

{\bf Prudnikov I.M.}

\vspace{0.5cm}

\begin{center}
{\bf Upper and lower approximations of the second order.
Exhausters of the second order}
\end{center}

\vspace{1cm}

The upper and lower approximations of the second order are defined
for a function differentiable in the directions. Exhausters of the
second order are introduced. The relation between the second-order
Frechet subdifferentials and the second-order exhausters is given.
A rule for constructing exhausters of the first and second orders
is given for Lipschitz functions.Necessary and sufficient extremum
conditions through exhausters are given. It is discussed  about
the use of Exhausters in optimization.

\vspace{0.5cm}

{\bf Key Words.} Lipschitz function, directional derivative, upper
convex and lower concave approximations of the second order, upper
and lower Exhausters of the second order, BiExhauster of the
second order, extremum points, extremum condition, Frechet
subdifferential.

\vspace{0.5cm}

\section{\bf Introduction}

\vspace{1cm}

We will assume further  that $f(\cdot):\mathbb{R}^{n} \to
\mathbb{R}$ is an arbitrary continuous, directional differentiable
function. There are various definitions of directional
differentiability.

Let's introduce some notation
$$
f_D^{\uparrow }(x_{0},g)=  \blim \bsup_{ \al \to +0}
(f(x_{0}+\alpha g ) - f(x_{0}))/\alpha .
$$
$f_D^{\uparrow}(x_{0}, g)$ is called the upper Dini derivative of
$f(\cdot)$ at the point $x_{0}$ in the direction $g$. The limit
$$
f_D^{\downarrow }(x_{0},g)=  \blim \binf_{ \al \to +0}
(f(x_{0}+\alpha g ) - f(x_{0}))/\alpha
$$
is called the lower Dini derivative at $x_0$ in the direction $g$.

If $f_D^{\uparrow}(x_{0},g)=f_D^{\downarrow }(x_{0},g),$ for all $
g \in \mathbb{R}^n$ then the function $f(\cdot)$ is called Dini
differentiable at the point $x_0$.

The upper Hadamard derivative  $f_H^{\uparrow } (x_0, g)$ of
$f(\cdot)$ at the point $x_0$ in the direction $g \in
\mathbb{R}^n$ is called the limit
$$
f_H^{\uparrow } (x_0, g)= \lim \sup_{\footnotesize \begin{array}{cc} g' \rar g \\
\al \rar 0^+ \end{array}} \frac{f(x_0 +\al g') - f(x_0)}{\al}.
$$
The limit
$$
f_H^{\downarrow } (x_0, g)= \lim \inf_{\footnotesize \begin{array}{cc} g' \rar g \\
\al \rar 0^+
\end{array}} \frac{f(x_0 +\al g') - f(x_0)}{\al},
$$
is called the lower Hadamard derivative of $f(\cdot)$ at $x_0$ in
the direction $g$. If $f_H^{\uparrow}(x_{0},g)=f_H^{\downarrow
}(x_{0},g)$ for all $g \in \mathbb{R}^n $, then the function
$f(\cdot)$ is called Hadamard differentiable at $x_0$.

We will assume that $f(\cdot)$ is directional differentiable in
the Dini sense. Note that for a Lipschitz  function $f(\cdot)$ in
a neighborhood of the point $x_0$ the equality
$$
f'_H (x_0, g) = f'_D (x_0, g),
$$
holds, as soon as
$$
\mid f(x_0 +\al g') - f(x_0+\al g) \mid  \leq \al L \|g' - g \|.
$$
Also we can state in this case that    $f_D(x_0, \cdot)$ is finite
and Lipschitz in $g \in \mathbb{R}^{n}$. The Dini derivative of
$f(\cdot)$ in the direction $g$ at $x_0$ will be denoted by
$f'(x_0, g)$.

Academician B.N. Pshenichnyi introduced the concept of upper
convex approximation (UCA) of a function $f(\cdot)$ at a point
$x_0$.

\begin{defi}  \cite{pshenichnyi} A function $h(\cdot):
\mathbb{R}^n \rar \mathbb{R}$ is called  UCA of the first-order of
a function $f(\cdot)$ at a point $x_0$ if

1) $h(g) \ge f'(x_{0},g) \,\,\,\, \forall g \in \mathbb{R}^n,$

2) $h(\cdot)$ is a convex closed positively homogeneous (p.h.)
function of first order.

\end{defi}

Lower concave approximations (LCA) of the function $f(\cdot)$ at
$x_0$ were introduced in \cite{demrub}.

\begin{defi}  A function $\chi(\cdot): \mathbb{R}^n \rar
\mathbb{R}$ is called LCA  of the first order of a function
$f(\cdot)$ at a point $x_0$ if

1) $\chi(g) \leq f'(x_{0},g) \,\,\,\, \forall g \in \mathbb{R}^n,$

2) $\chi(\cdot)$ is a  concave closed p.h. function of the first
order.
\end{defi}

Obviously, there are infinitely many UCA and LCA. Using UCA and
LCA, one can formulate the necessary and sufficient conditions for
minimum or maximum of a function at a point \cite{demroshina},
\cite{demyanovnexhauster}.

Academician B.N. Pshenichnyi defined the main upper convex
approximations \cite{pshenichnyi}, p. 221, based on which
Professor V.F. Demyanov introduced the concept of an exhaustive
set of UCA: $h_{\lambda}$, $\lambda \in \Lambda$, where $\Lambda$
is a finite or infinite set of indices \cite{demyanovnexhauster},
\cite{demyanovnexhauster2}. This is a set of UCA $h_{\lambda}$,
$\lambda \in \Lambda$ such that
$$
     \inf_{\la \in \La} h_{\la} (g) = f'(x_0, g)  \,\,\,\, \forall g
     \in S^{n-1}_1(0)=\{ v \in \mathbb{R}^n \vl \| v \| =1 \}.
$$
The exhaustive set of UCA  is also called an upper Exhauster  of
$f(\cdot)$ at $x_0$ and is denoted by $E^*(f)$. We will also call
it the upper Exhauster of the first order and denote it by
$E_1^*(f)$. The set of functions from $E_1^*(f)$ forms the upper
exhaustive set $E_1^*(h_1)$ of $h_1(\cdot)$ at 0, where
$h_1(g)=f'(x_0,g)$.

Similarly, the lower exhaustive set of the first order $E_{1*}(f)$
is introduced as such a set consisting of $\chi_{\mu}(\cdot)$,
$\mu \in \Upsilon $, for which
$$
\sup_{\mu \in \Upsilon} \chi_{\mu}(g) = f'(x_0,g) \,\,\,\, \forall
g \in S^{n-1}_1 (0).
$$
Biexhauster is a pair $[E_1^*(f), E_{1*}(f)] $.

The construction of the upper and lower Exhausters is interesting
for optimization, since they can be used to formulate necessary
and sufficient conditions for the minimum and maximum of a
function $f(\cdot)$ at a point $x_0$ \cite{demroshina},
\cite{proudexhaust}. In \cite{pshenichnyi}, rules for constructing
upper and lower Exhausters were given for various combinations of
functions (sum, maximum, minimum). If we know the upper and lower
Exhausters of a function $f(\cdot)$ at a point $x$, then we can
find the directions of fastest increase and decrease of $f(\cdot)$
at a point $x$. This is the essence of constructing Exhausters.
The construction of the first and second order Exhausters is
closely related to  calculation of quasidifferentials and
codifferentials for quasidifferentiable and codifferentiable
functions. This is also important for optimization.

Since, according to Minkowski duality, every convex finite p.h.
function $h(\cdot)$ is uniquely determined by its subdifferential
at zero $\p h(0)$, we can assume that the Exhauster $E^*_1(f)$
consists of convex compact sets $\p h_{\la}(0)$, $\la \in \La $,
which are subdifferentials at zero of some convex p.h. functions
$h_{\la}(\cdot) \in E^*_1(f)$, $\la \in \La$.

%%%%%%%%%%%%%%%%%%%%%%%%%%%%%%%%%%%%%%%%%%%%%%%%%%%%

\begin{thm} \cite{demroshina}
Let $ E^*_1(f)$ be a first-order upper Exhauster of a function
$f(\cdot)$ at $x_0$. Then, if for any $C \in E^*_1(f) $ there
exists $ \delta>0 $ such that
$$
B^n_{\del} \subset C,
$$
where $B^n_{\del}(0) = \{ z \in \mathbb{R}^n \vl \| z \| \leq \del
\} $, then $x_0$ is a minimum.
\end{thm}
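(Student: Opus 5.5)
The plan is to translate the inclusion $B^n_{\del}(0)\subset C$ into a positive lower bound on the directional derivative, and then to pass from that first-order bound to an actual (in fact strict) local minimum by a compactness argument on the unit sphere.

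\emph{Step 1: support functions.} By Minkowski duality each $C\in E_1^*(f)$ is $\p h_{\la}(0)$ for some $\la\in\La$, so $h_{\la}(g)=\max_{v\in C}\langle v,g\rangle$. The inclusion $B^n_{\del}(0)\subset C$ gives
$$
h_{\la}(g)\ \ge\ \max_{\|v\|\le \del}\langle v,g\rangle=\del\|g\| .
$$
The radius $\del=\del_{\la}$ depends on $C$, and the argument needs it uniform, so I want $\del_0:=\inf_{\la\in\La}\del_{\la}>0$.

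\emph{Step 2: uniform radius.} This is the main obstacle. If $\La$ is infinite and $\inf_\la \del_\la=0$, the conclusion reduces only to $f'(x_0,g)\ge 0$, which is merely a necessary condition. I would therefore use the standing convention of \cite{demroshina} that the exhauster is a closed and bounded, hence compact, family of convex compact sets in the Hausdorff metric; when $\La$ is finite this is automatic. The inradius functional $r(C)=\min_{\|g\|=1}\max_{v\in C}\langle v,g\rangle$ is continuous (even Lipschitz) in the Hausdorff metric. The hypothesis gives $r(C)\ge \del_C>0$ for every $C\in E_1^*(f)$, so by compactness $\del_0=\min_C r(C)>0$. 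Consequently
$$
f'(x_0,g)=\inf_{\la\in\La}h_{\la}(g)\ \ge\ \del_0\|g\| \qquad \forall g\in\mathbb{R}^n .
$$

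\emph{Step 3: from derivative to minimum.} I would argue by contradiction. Suppose there are $x_k\to x_0$, $x_k\ne x_0$, with $f(x_k)\le f(x_0)$. Write $x_k=x_0+\al_k g_k$ with $\al_k=\|x_k-x_0\|\to 0^+$ and $\|g_k\|=1$. Passing to a subsequence, $g_k\to g$ with $\|g\|=1$. Then
$$
f_H^{\downarrow}(x_0,g)\le \liminf_k \frac{f(x_0+\al_k g_k)-f(x_0)}{\al_k}\le 0 .
$$
For $f$ Lipschitz near $x_0$, as assumed in the introduction, the Hadamard and Dini derivatives coincide: $|f(x_0+\al g_k)-f(x_0+\al g)|\le \al L\|g_k-g\|$. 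Hence $f'(x_0,g)\le 0$, which contradicts $f'(x_0,g)\ge\del_0>0$.

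Thus $x_0$ is a strict local minimum; more quantitatively, $f(x)-f(x_0)\ge (\del_0/2)\|x-x_0\|$ near $x_0$. The only delicate point is the uniformity in Step 2, and I would state explicitly the compactness (or finiteness) of $E_1^*(f)$ that makes it work.
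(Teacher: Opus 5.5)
The paper gives no proof of this statement; it only quotes it from \cite{demroshina}. On its own terms your argument is correct. Step~1 turns $B^n_{\delta}(0)\subset C$ into $h_C(g)\ge\delta\|g\|$. Step~2 gets a uniform radius from the $1$-Lipschitz dependence of the inradius $r(C)$ on $C$ in the Hausdorff metric. Step~3 is a valid compactness argument on $S^{n-1}_1(0)$. Run with $\delta_0/2$ in place of $0$, it even gives $f(x)-f(x_0)\ge(\delta_0/2)\|x-x_0\|$ near $x_0$.

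Both hypotheses you bring in are genuinely necessary. They are corrections to the printed statement, not conventions, and you should state them as explicit hypotheses.

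\emph{Uniform radius.} Without it the statement is false, even for smooth $f$. Take $f(x)=-\|x\|^2$ and $x_0=0$. Then $f'(0,g)=0=\inf_{0<\lambda\le 1}\lambda\|g\|$, so $\{B^n_\lambda(0)\mid 0<\lambda\le 1\}$ is an upper exhauster in which every member contains a ball, yet $x_0$ is a maximum. Its Hausdorff closure contains $\{0\}$, which is exactly what your compactness assumption excludes. A simpler repair is to take one $\delta$ for all $C$, i.e.\ $B^n_\delta(0)\subset\bigcap_{C}C$, the form in which this sufficient condition is usually stated; then Step~2 is unnecessary.

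\emph{Lipschitz continuity.} The introduction does not assume it. The paper's standing assumption is only that $f$ is continuous and Dini directionally differentiable; Lipschitz continuity appears there only as a case in which $f'_H=f'_D$. For such $f$ your Step~3 breaks, and so does the theorem, even with a uniform $\delta$. On $\mathbb{R}^2$ put $f(x)=\|x\|(1-2\varphi(x))$, where $\varphi=\sum_k\varphi_k$ with $0\le\varphi_k\le 1$, $\varphi_k(x_k)=1$ at $x_k=(2^{-k},4^{-k})$, and $\varphi_k$ supported in the ball of radius $8^{-k}$ about $x_k$. Every ray from $0$ eventually misses these balls, so $f'(0,g)=\|g\|$ and $\{B^2_1(0)\}$ is an upper exhauster. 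Nevertheless $f(x_k)<0=f(0)$ with $x_k\to 0$. So state ``$f$ Lipschitz near $x_0$'' as a hypothesis. The weaker condition $f'_H(x_0,\cdot)=f'_D(x_0,\cdot)$ is all that Step~3 actually uses.
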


We can also formulate a sufficient condition for a maximum of the
function $f(\cdot)$ at $x_0$ using the lower Exhauster $ E_{1*}(f)
$.

Rubinov A.M. proved {\cite{demrub}, p. 171} that if a function
$h_1(\cdot): \mathbb{R}^n \rar \mathbb{R}$, \,\,\, $h_1(g)=f'(x_0,
g)$ is upper semicontinuous (u.s.c.) in $g$, that is
$$
\limsup_{y \rar g} f'(x_0, y) \leq f'(x_0,g) \,\,\, \forall g \in
S^{n-1}_1(0)
$$
and is bounded above on $B^n_1(0)$, then there exists an upper
Exhauster $E_1^*(f)$ of $f(\cdot)$ at the point $x_0$, and hence
an upper Exhauster $E_1^*(h_1)$ of $h_1(\cdot)$ at zero.
Similarly, if $h_1(\cdot)$ is lower semicontinuous (l.s.c.) in
$g$, that is,
$$
\liminf_{y \rar g} f'(x_0, y) \meq f'(x_0,g) \,\,\, \forall g \in
S^{n-1}_1(0)
$$
and is bounded below on $B^n_1(0)$, then there exists a lower
Exhauster $E_{1*}(f)$ of $f(\cdot)$ at $x_0$ consisting of a
family of superlinear functions $q_{\la}$ such that
$$
\sup_{\la \in \La} q_{\la} (g) = f'(x_0, g) \,\,\,\, \forall g \in
S^{n-1}_1(0)=\{ v \in \mathbb{R}^n \vl \| v\| =1\}.
$$
%%%%%%%%%%%%%%%%%%%%%%%%%%%%%%%%%%%%%%%%%%%%%%%%%%%%%%%%%

\begin{thm}
If a function $f(\cdot)$ is u.s.c.  at $x_0$, that is
$$
\limsup_{x \rar x_0} f(x) \leq f(x_0),
$$
and uniformly differentiable with respect to directions at $x_0$,
then the function $h_1(\cdot):$ $h_1(g)=f'(x_0,g)$ is also u.s.c.
with respect to $g$.
\end{thm}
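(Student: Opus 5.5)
Fix $g \in \mathbb{R}^n$ and a sequence $y_k \rar g$; the goal is $\limsup_{k} f'(x_0,y_k) \le f'(x_0,g)$. By uniform directional differentiability at $x_0$ we mean that for every $\eps>0$ there is $\al_0>0$ such that
$$
\left| \frac{f(x_0+\al y)-f(x_0)}{\al} - f'(x_0,y) \right| \le \eps \quad \forall \al \in (0,\al_0], \ \forall y \in B^n_R(0),
$$
with $R$ large enough to contain $g$ and all $y_k$. The idea is to replace each $f'(x_0,y_k)$ by one difference quotient with a common step $\al$, compare it with the quotient in direction $g$, and then let $\al \to 0$.

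First, fix $\eps>0$ and the corresponding $\al_0$. For every $\al \in (0,\al_0]$ and every $k$,
$$
f'(x_0,y_k) \le \frac{f(x_0+\al y_k)-f(x_0)}{\al} + \eps .
$$
Second, for this fixed $\al$ we let $k \to \infty$. Since $x_0+\al y_k \rar x_0+\al g$, we need upper semicontinuity of $f$ at the point $x_0+\al g$. The hypothesis in the statement only gives upper semicontinuity at $x_0$ itself. However, the paper assumes throughout the Introduction that $f$ is continuous, so continuity at $x_0+\al g$ is available. This gives
$$
\limsup_{k\to\infty} f'(x_0,y_k) \le \frac{f(x_0+\al g)-f(x_0)}{\al} + \eps .
$$
Third, we let $\al \to +0$. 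The right-hand side tends to $f'(x_0,g)+\eps$. Since $\eps$ was arbitrary, the claim follows.

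The main obstacle is the second step: it needs upper semicontinuity of $f$ at the points $x_0+\al g$, not at $x_0$. I would resolve this by invoking the standing continuity assumption on $f$, or, if the statement is read more strictly, by assuming upper semicontinuity of $f$ in a neighbourhood of $x_0$. The u.s.c.\ hypothesis at $x_0$ alone handles only the degenerate case $g=0$, where $f'(x_0,0)=0$. I would also check that the uniformity is used with respect to directions in a bounded set; since $y_k$ are bounded, uniformity on $S^{n-1}_1(0)$ combined with positive homogeneity of $f'(x_0,\cdot)$ suffices after rescaling $\al$.
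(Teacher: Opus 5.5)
Your proof is correct and is essentially the paper's. The paper interchanges $\lim_{\alpha\to 0^+}$ and $\limsup_{y\to g}$ using uniform directional differentiability, then bounds $\limsup_{y\to g} f(x_0+\alpha y)$ by $f(x_0+\alpha g)$; these are your three steps, and your $\varepsilon$-estimate just makes the interchange explicit.

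Your caveat is also well founded. The paper's last inequality tacitly uses upper semicontinuity of $f$ at $x_0+\alpha g$, not at $x_0$, and u.s.c.\ at $x_0$ alone does not suffice. For a counterexample, take $x_0=0$ and $f=h_1$ on $\mathbb{R}^2$, where $h_1(y)=\|y\|\phi(y/\|y\|)$ with $\phi(e_1)=0$ and $\phi\equiv 1$ elsewhere on the circle; this $f$ is continuous at $0$ and uniformly directionally differentiable there, but $h_1$ is not u.s.c.\ at $e_1$. So the standing continuity assumption from the Introduction is genuinely needed.
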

\begin{rem}
Any function $f(\cdot)$ that is Lipschitz  in a neighborhood of
$x_0$ and differentiable with respect to directions is uniformly
differentiable with respect to directions at $x_0$. Therefore, the
formulated theorem is true for the Lipschitz function $f(\cdot)$.
\end{rem}

{\bf Proof}. Let us carry out a series of calculations
$$
\limsup_{y \rar g} h_1(g)=\limsup_{y \rar g} \lim_{\al \rar 0^+}
\frac{f(x_0+\al \, y)-f(x_0)}{\al} =
$$
$$
= \lim_{\al \rar 0^+} \limsup_{y \rar g} \frac{f(x_0+\al \,
y)-f(x_0)}{\al} \leq \lim_{\al \rar 0^+} \frac{f(x_0+\al \,
g)-f(x_0)}{\al} = h_1(g),
$$
i.e. $h_1(\cdot)$ is u.s.c. at the point $g$, which was required
to be proved. $\Box$

B.N. Pshenichnyi also proved {\cite{pshenichnyi}, p. 222} that if
$h_1(\cdot)$ takes finite values and is also closed, i.e. its
epigraph $ epi f = \{ (x,y) \in \mathbb{R}^{n+1} | y \meq h_1(x)
\}$ is a closed set, which is equivalent to the lower
semicontinuity of $h_1(\cdot)$ \cite{pshenichnyi}, p. 62, i.e.
$$
\lim \inf_{g_k \rar g_0} h_1(g_k) \meq h_1(g_0),
$$
then there exist a principal UCA $h_{1\lambda}(\cdot) $ such that
$$ h_1(g)= \inf_{\lambda} h_{1\lambda}(g).$$ In the terminology of
Demyanov V.F. and Rubinov A.M., the latter means that there exists
an exhaustive set of UCA. \cite{demrub}

%%%%%%%%%%%%%%%%%%%%%%%%%%%%%%%%%%%%%%%%%%%%%%%%%%%%%%%%%%%

\begin{thm}
If $h_1(\cdot)$ is lower semicontinuous (l.s.c.) and does not take
infinite values, then there exist some lower $E_{1*}(h_1)$ and
upper $E_{1}^*(h_1)$ Exhausters of $h_1$.

If $h_1(\cdot)$ is upper semicontinuous (u.s.c.) and does not take
infinite values, then there exist some lower $E_{1*}(h_1)$ and
upper $E_{1}^*(h_1)$ Exhausters of $h_1$. \label{thmexh0}
\end{thm}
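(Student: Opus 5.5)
The plan is to build both Exhausters directly from the values of $h_1$, using the positive homogeneity $h_1(\la g)=\la h_1(g)$ for $\la \meq 0$, which holds because $h_1(g)=f'(x_0,g)$ is a directional derivative. On the unit sphere the function $h_1$ is finite. I would first show it is bounded on $S^{n-1}_1(0)$: in the l.s.c. case it is bounded below by compactness, and in the u.s.c. case it is bounded above. The one-sided bound that is missing is the delicate point, discussed in the third paragraph.

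\textbf{Upper Exhauster in the l.s.c. case.} This is Pshenichnyi's result quoted above (\cite{pshenichnyi}, p. 222): a finite closed p.h. function is the infimum of its principal UCA, so $E_1^*(h_1)$ exists. To be self-contained, I would realize this explicitly. For each $g_0 \in S^{n-1}_1(0)$ and each linear functional $v$ with $\langle v, g_0\rangle = h_1(g_0)$, set
$$
h_{g_0,v}(g)=\langle v,g\rangle + M\,\| g - \langle g,g_0\rangle g_0\|,
$$
with the constant $M$ chosen large enough to keep this function above $h_1$. Each $h_{g_0,v}$ is convex and p.h., and it equals $h_1$ at $g_0$. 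A lower-bound version of the same construction, replacing $+M$ by $-M$, gives superlinear minorants whose supremum is $h_1$ and which touch $h_1$ at $g_0$. Together these give $E_{1*}(h_1)$. For the u.s.c. case I would apply the same argument to $-h_1$, which is l.s.c., and then swap the roles of the two Exhausters. Equivalently, I would invoke Rubinov's statement (\cite{demrub}, p. 171) recalled before Theorem 1.2.

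\textbf{Main obstacle.} The majorant (or minorant) must dominate $h_1$ globally while touching it at $g_0$. This requires a local bound on $h_1$ near $g_0$ of the form $h_1(g) \le h_1(g_0) + M\,\mathrm{dist}$ in the upper case. Semicontinuity in the right direction gives the needed one-sided bound near $g_0$ only up to $\eps$, not with equality at $g_0$. I would therefore try approximate touching: build majorants $h_{g_0,\eps}$ with $h_{g_0,\eps}(g_0) \le h_1(g_0)+\eps$, and then take the infimum over $\eps$ and $g_0$. This replaces exact touching by an infimum, which is all the definition of an Exhauster requires. To make $h_{g_0,\eps}$ a majorant, I would use the cone $\{g : h_1(g) < h_1(g_0)+\eps\}$. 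In the u.s.c. case this set is open near $g_0$, and a large multiple of the distance to the ray through $g_0$ pushes the function above the bound of $h_1$ on the sphere. That bound exists only when $h_1$ is bounded there, so I expect the finiteness hypothesis and compactness of the sphere to carry the argument. In the direction where semicontinuity does not give boundedness, I would fall back on the cited results of Pshenichnyi and Rubinov.
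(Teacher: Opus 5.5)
\textbf{Gap: the two cross cases cannot be proved.} Your proposal breaks down for an upper Exhauster when $h_1$ is only l.s.c., and for a lower Exhauster when $h_1$ is only u.s.c. No construction, yours or any other, can supply these. In the paper's setting every UCA is a finite convex p.h. function (equivalently, a compact set $\partial h_\lambda(0)$), hence continuous. A pointwise infimum of continuous functions is u.s.c., so $h_1=\inf_\lambda h_\lambda$ forces $h_1$ to be u.s.c. Dually, a supremum of finite superlinear functions is l.s.c.

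The obstruction is therefore semicontinuity itself, not a missing one-sided bound as you diagnose it. For a concrete counterexample in $\mathbb{R}^2$, let $h_1(g)=0$ on the ray $\{te_1: t\geq 0\}$ and $h_1(g)=\|g\|$ elsewhere. This is $f'(0,\cdot)$ for the continuous function $f(x)=\min\{\|x\|,\theta(x)\}$, where $\theta(x)$ is the angle between $x$ and $e_1$. It is finite, p.h., l.s.c., and bounded on the sphere. Every finite sublinear $h\geq h_1$ satisfies $h(e_1)=\lim_{g\to e_1}h(g)\geq \limsup_{g\to e_1}h_1(g)=1$. Hence $\inf_\lambda h_\lambda(e_1)\geq 1>0=h_1(e_1)$, and no upper Exhauster exists.

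Falling back on Pshenichnyi's result as quoted in the paper does not rescue this. That is exactly what the paper's own proof does: it obtains the lower Exhauster in the u.s.c. case by applying the quoted claim to $-h_1$, so your route and the paper's share the same defect. The quoted claim (finite l.s.c. p.h. function $=$ infimum of finite UCAs) is refuted by the example above. It does hold if UCAs may take the value $+\infty$, but then it is trivial for every finite p.h. function, with no semicontinuity needed: take $h_{g_0}(tg_0)=t\,h_1(g_0)$ for $t\geq 0$ and $h_{g_0}=+\infty$ off that ray. With compact-set Exhausters, the statement holds only when $h_1$ is continuous.

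\textbf{What you can prove.} Your third paragraph essentially establishes u.s.c.\ $\Rightarrow$ upper Exhauster (Rubinov), and, applied to $-h_1$, l.s.c.\ $\Rightarrow$ lower Exhauster. One repair is needed: penalize the distance to the ray $\mathbb{R}_+g_0$, not to the line as in your second paragraph. The term $\|g-\langle g,g_0\rangle g_0\|$ vanishes at $-g_0$, where your candidate equals $-h_1(g_0)$. It therefore majorizes $h_1$ only if $h_1(-g_0)\leq -h_1(g_0)$, which already fails for $h_1=\|\cdot\|$.

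The corrected construction works as follows. Set $c=h_1(g_0)+\varepsilon$ and $h(g)=c\langle g,g_0\rangle+M\,\mathrm{dist}(g,\mathbb{R}_+g_0)$. Then $h$ is sublinear with $h(g_0)=c$.
\begin{itemize}
\item Near $g_0$: for unit $g$ close to $g_0$, $1-\langle g,g_0\rangle\leq \mathrm{dist}(g,\mathbb{R}_+g_0)$. Hence $h(g)\geq c+(M-|c|)\,\mathrm{dist}(g,\mathbb{R}_+g_0)\geq c>h_1(g)$ once $M\geq |c|$ and the neighbourhood is chosen by upper semicontinuity.
\item Away from $g_0$: the distance is bounded below, so a large $M$ beats $\sup_{S^{n-1}_1(0)}h_1<\infty$.
\end{itemize}
Taking the infimum over $g_0$ and $\varepsilon$ gives the upper Exhauster.
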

{\bf Proof.} The first assertion follows from the results of A.M.
Rubinov \cite{demrub}, p. 171, and B.N. Pshenichnyi
\cite{pshenichnyi}, p. 222. We prove the second assertion.

Let $h_1(\cdot)$ be u.s.c. that is
$$
\lim \sup_{g_k \rar g} h_1(g_k) \leq h_1(g).
$$
According to  Rubinov's theorem, there exists an upper Exhauster
$E_{1}^*(h_1)$, i.e. a family of positively homogeneous convex
functions $h_{\lambda}(\cdot)$, $\lambda \in \Lambda$, such that
$$
h_1(g)= \inf_{\lambda \in \Lambda} \, h_{1\lambda}(g) \,\,\,
\forall g \in \mathbb{R}^n.
$$
We multiply both sides of the last inequality by $-1$. Then
$$
-\lim \sup_{g_k \rar g} h_1(g_k) \geq -h_1(g)
$$
or, taking into account that
$$
\sup_{g} h_{1 } (g) = - \inf_{g}( - h_{1} (g)),
$$
we have
$$
- \lim - (\inf_{g_k \rar g} -h_1 (g_k)) = \lim \inf_{g_k \rar g} -
h_1(g_k) \geq -h_1(g) \,\,\, \forall g \in \mathbb{R}^n,
$$
i.e. the function $-h_1(\cdot) $ is lower semicontinuous.
According to the theorem of Pshenichnyi B.N. there exists a family
of convex positively homogeneous functions $-q_{\lambda }(\cdot)$,
$ \lambda \in \Lambda$, such that
$$
-h_1(g) = \inf_{\lambda \in \Lambda} -q_{\lambda}(g) \,\,\,
\forall g \in \mathbb{R}^n.
$$
But then
$$
h_1(g)= \sup_{\lambda \in \Lambda} q_{\lambda}(g) \,\,\, \forall g
\in \mathbb{R}^n.
$$
Furthermore, if $-q_{\lambda}(\cdot) $ is a convex positively
homogeneous function, then $q_{\lambda}(\cdot)$ is a concave
positively homogeneous function. It follows that $h_1(\cdot)$ has
a lower Exhauster. The theorem is proved. $\Box$

%%%%%%%%%%%%%%%%%%%%%%%%%%%%%%%%%%%%%%%%%%%%%%%%%%%%%%%%%%%%%

\section{Construction of the Exhausters}

The construction of $E_1^*(f)$ and $E_{1*}(f)$ for a Lipschitz
function $f(\cdot)$ with Lipschitz constant $L$ is described in
\cite{proudexhaust}, where the method of cutting planes
(hyperplanes) is presented. To do this, we first construct the
function
$$
\tilde f (x) = f(x) + L \| x - x_0 \|.
$$
Next, we determine the sets of averaged integral gradients,
calculated along some curves $r(x_0, \cdot, g) \in \eta(x_0)$.
Here $ \eta(x_0)$ is a set of the curves $r(x_0, \cdot, g) = x_0
+\al \, g + o(\al)$, $g \in S^{n-1}_1(0)$ defined in
\cite{pimnewconstr1}.

\begin{defi}. Let $\eta (x_0)$ be the set of curves
$r(x_0,\al,g) = x_0 + \al g + o_r(\al)$, where $g \in
S^{n-1}_1(0)$ and the function $o_r(\cdot):[0,\al_0] \rar
\mathbb{R}^n, \al_0>0$
satisfies the following conditions \\
1) $o_r(\al)/(\al) \rar +0$ for $\al \rar +0$ uniformly
for all curves $r(\cdot)$ ; \\
2) the function $o_r(\cdot)$ is continuously differentiable and
its derivative $o'(\cdot)$ is bounded above in norm near the
origin: there exists $c<\infty$ such that
$$
\max_{\tau \in [0,\al_0]} \parallel o_r'(\tau) \parallel \leq c
$$
3) the gradient $\grad f(r(x_0,\cdot, g))$ exists almost
everywhere (AE) along the curve $r(x_0,\cdot ,g)$.
\end{defi}

\begin{rem} By property 3 of the definition, the set
$\eta(x_0)$ depends on the choice of the function $f(\cdot)$. The
constants $c$ and $\al_0$ are the same for all curves $r \in
\eta(x_0)$.
\end{rem}

We introduce the following set of vectors
$$
E f(x_0) = \Bigl \{ v \in \mathbb{R}^n : \exs {\al_{k}}, \al_{k}
\rar +0, ( \exs \,g \in S^{n-1}_1 (0) ),
$$
$$(\exs r(x_0,\cdot ,g)\, \in \, \eta(x_0)) ,
v = \lim_{\al_k \rar +0} \al_k^{-1} \; \int^{\al_k}_0 \, \grad
f(r(x_0,\tau,g))d\tau \Bigr \}, \,\, Df(x_0)= \mbox{co} \,\,
Ef(x_0),
$$
where $\grad f$ is the gradient of $f$ at the points where it
exists, and the integral is understood in the Lebesgue sense. From
here and on, we use the notations $S^{n-1}_1(0)$ and $B^n_1(0)$
for the sphere and ball of radius 1 in n-dimensional space
centered at the origin. The convexity of the set $Df(x_0)$ is
obvious. The closure property is proved, and a connection is
established between the set $Df(x_0)$ and the Clarke
subdifferential $\p_{CL}f(x_0)$ \cite{pimnewconstr1} for any
Lipschitz function $f(\cdot)$.

Take any vector $g \in S^{n-1}_1 (0)$. Let
$$
P_g (v)=\{ w \in \mathbb{R}^n \vl (w,g) \leq (v,g) \}.
$$
Here $(a,b)$ denotes the scalar product of vectors $a$ and $b$.
Obviously, $ P_g (v)$ is a half-space for any vector $v$. For any
vector $v \in E_g(\tilde{f})$ we define
$$
C_g(\tilde{f}) = D\tilde{f}(x_0) \cap P_g (v),
$$
Where
$$
E_g \tilde{f}(x_0) = \bco \{ v \in \mathbb{R}^n : \exs {\al_{k}},
\al_{k} \rar +0,
$$
$$
(\exs r(x_0,\cdot ,g)\, \in \, \eta(x_0)) , v = \lim_{\al_k \rar
+0} \al_k^{-1} \; \int^{\al_k}_0\, \grad
\tilde{f}(r(x_0,\tau,g))d\tau \; \}.
$$
Obviously, $C_g(\tilde{f})$ is a convex compact set for any $g \in
S^{n-1}_1(0)$.

%%%%%%%%%%%%%%%%%%%%%%%%%%%%%%%%%%%%%%%%%%%%%%%%%%%%%%%%%%%%%%

The following theorems are proved in \cite{proudexhaust}.

\begin{thm}
A bounded family of convex compact sets $C$, including the sets
$C_g(\tilde{f})$ for all $g \in S^{n-1}_1(0)$, forms the upper
Exhauster $E^*(\tilde{f})$, i.e. the equality
$$
\tilde f'(x_0,g) \overset{\mathrm{def}}{=} \tilde{h}(g)= \inf_{C
\in E^*(\tilde{f})} \,\, \tilde{h}_C (g),
$$
where $\tilde{h}_C (g)=\max_{v \in C } \, (v,g)$, holds.
\label{thmexh1}
\end{thm}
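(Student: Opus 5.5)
To prove the equality $\tilde h(g)=\inf_{C\in E^*(\tilde f)}\tilde h_C(g)$ I would establish two inequalities separately for each fixed $g\in S^{n-1}_1(0)$ (positive homogeneity then extends everything to $\mathbb{R}^n$):
\begin{itemize}
\item[(a)] $\tilde h_C(g)\ge \tilde h(g)$ for every $C$ in the family and every $g$, so that every $\tilde h_C$ is a UCA;
\item[(b)] there is a particular set $C$ in the family with $\tilde h_C(g)\le\tilde h(g)$, which makes the infimum attained.
\end{itemize}
The basic tool is the representation of directional derivatives through averaged integral gradients. Along a curve $r(x_0,\cdot,g)\in\eta(x_0)$ the function $\tilde f$ is Lipschitz and differentiable a.e. by property 3. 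Hence
$$
\tilde f(r(x_0,\al,g))-\tilde f(x_0)=\int_0^{\al}\bigl(\grad\tilde f(r(x_0,\tau,g)),\,g+o_r'(\tau)\bigr)d\tau .
$$
Dividing by $\al$ and using the Lipschitz property to replace $r(x_0,\al,g)$ by $x_0+\al g$, we get that $\tilde h(g)$ equals $(v,g)$ for every limit $v\in E_g\tilde f(x_0)$. The point to check is that the term with $o_r'$ vanishes in the limit. Only boundedness of $o_r'$ is given, so I would handle this through the uniform convergence $o_r(\al)/\al\to0$ of condition 1): either pick straight-line curves (with $o_r\equiv0$) when they belong to $\eta(x_0)$, or integrate by parts. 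Since $E_g\tilde f(x_0)$ is the convex hull of such limits, $(v,g)=\tilde h(g)$ for all $v\in E_g\tilde f(x_0)$.

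For (b), take $v\in E_g\tilde f(x_0)$ and $C=C_g(\tilde f)=D\tilde f(x_0)\cap P_g(v)$. By the definition of $P_g(v)$ we get $\max_{w\in C}(w,g)\le (v,g)=\tilde h(g)$, and $v\in C$ gives equality. This step is immediate once the representation above is in place.

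The main obstacle is (a): it must hold for every $g'$ and every set $C_g(\tilde f)$, including those built for other directions $g\ne g'$. This is exactly where the shift $\tilde f=f+L\|x-x_0\|$ is used. The term $L\|x-x_0\|$ adds $L B^n_1(0)$-type contributions, namely $L\,g'$ along direction $g'$, so $D\tilde f(x_0)$ is large in every direction. I would show that cutting by $P_g(v)$ removes only points $w$ with $(w,g)>\tilde h(g)$, and that some $w\in C$ with $(w,g')\ge\tilde h(g')$ still survives. A natural candidate is $w=v'+\beta(g'-(g',g)g)$, with $v'\in E_{g'}\tilde f(x_0)$ and a suitable $\beta$, or else some element of $E_{g'}\tilde f(x_0)$ itself. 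To show this candidate lies below the hyperplane, I would use the bound $\tilde h(g')\le \tilde h(g)+2L\|g'-g\|$ together with $(v',g)\le \tilde h(g)$. The latter holds because $\tilde h$ is the support-type lower bound, and $D\tilde f(x_0)$ is related to the Clarke subdifferential as in \cite{pimnewconstr1}. For an arbitrary further set $C$ in the bounded family I would rely on the assumption implicit in the statement that the family consists of UCA-generating sets, and check (a) only for the $C_g$.
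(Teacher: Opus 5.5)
The paper does not prove this theorem; it defers to \cite{proudexhaust}, so the proposal is judged on its own. Your split into (a) every $C_g(\tilde f)$ is a UCA, and (b) $\tilde h_{C_g}(g)=\tilde h(g)$ is the right one. Part (b) is fine once you know $(v,g)=\tilde h(g)$ for $v\in E_g\tilde f(x_0)$.

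For that identity your two workarounds would not work. A straight ray need not belong to $\eta(x_0)$, and you need the identity for every curve generating $E_g\tilde f(x_0)$. Integration by parts would require differentiating $\tau\mapsto\nabla\tilde f(r(x_0,\tau,g))$. You also do not need them. Condition 2) makes $o_r$ continuously differentiable on the closed interval $[0,\alpha_0]$. Together with $o_r(0)=0$ and $o_r(\alpha)/\alpha\to0$ this gives $o_r'(0)=0$, hence $o_r'(\tau)\to0$. The extra term is then bounded by $2L\max_{[0,\alpha]}\|o_r'\|\to0$.

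The genuine gap is (a), which is the whole content of the theorem. Your key inequality $(v',g)\le\tilde h(g)$ for $v'\in E_{g'}\tilde f(x_0)$ is false. If it held for all $g,g'$, then $\tilde h$ would be a supremum of linear functions, i.e. sublinear. Your correction $\beta(g'-(g',g)g)$ is orthogonal to $g$, so it does not move the point relative to the cutting hyperplane at all. Its membership in $D\tilde f(x_0)$ is also unjustified.

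Concretely, take $n=2$, $x_0=0$, $f(x)=-|x_1|$, $L=1$. Then $\tilde h(g)=\|g\|-|g_1|$ and $D\tilde f(0)=[-1,1]^2$. For $g=(1,0)$ you get $E_g\tilde f(0)=\{0\}$, $\tilde h(g)=0$ and $C_g(\tilde f)=[-1,0]\times[-1,1]$. For $g'=(-\cos\phi,\sin\phi)$ with $0<\phi<\pi/2$, you get $E_{g'}\tilde f(0)=\{(1-\cos\phi,\sin\phi)\}$. Hence $(v',g)=1-\cos\phi>0=\tilde h(g)$, and $v'+\beta(0,\sin\phi)\notin C_g(\tilde f)$ for every $\beta$. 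Nevertheless $C_g(\tilde f)$ is a UCA: $\max_{w\in C_g}(w,g')=\cos\phi+\sin\phi\ge1-\cos\phi=\tilde h(g')$. The maximum is attained at the vertex $(-1,1)$, which comes from gradients near the unrelated direction $(0,1)$. So (a) cannot be obtained by perturbing elements of $E_{g'}\tilde f(x_0)$. The local bound $\tilde h(g')\le\tilde h(g)+2L\|g'-g\|$ also cannot reach directions $g'$ far from $g$.

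What is missing is a global domination property of $D\tilde f(x_0)$: for all $y,z$,
\[ \tilde h(y+z)-\tilde h(y)=\lim_{s\to+0}\frac{\tilde f(x_0+sy+sz)-\tilde f(x_0+sy)}{s}\le\tilde f^{\circ}(x_0;z)\le\max_{w\in D\tilde f(x_0)}(w,z). \]
The last step is exactly where the relation with the Clarke subdifferential from \cite{pimnewconstr1} must enter: you need $\partial_{CL}\tilde f(x_0)\subseteq D\tilde f(x_0)$. Since the cut is a single linear constraint on a compact convex set, the minimax theorem then gives
\[ \max_{w\in C_g(\tilde f)}(w,g')=\inf_{\lambda\ge0}\Bigl[\max_{w\in D\tilde f(x_0)}(w,g'-\lambda g)+\lambda\tilde h(g)\Bigr]\ge\inf_{\lambda\ge0}\bigl[\tilde h(g')-\tilde h(\lambda g)+\lambda\tilde h(g)\bigr]=\tilde h(g'). \]
No explicit point is needed. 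The shift by $L\|x-x_0\|$, which you identified as the crux, plays no role in this step.
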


\begin{rem} The theorem provides a rule for constructing the upper Exhauster
$E^*(\tilde{f})$. Clearly, the upper Exhauster $E^*(\tilde{f})$ of
the function $\tilde{f}$ at $x_0$ coincides with the upper
Exhauster of the function $\tilde{h}(\cdot)$ at zero, i.e.,
$E^*(\tilde{f}) = E^*(\tilde{h}).$
\end{rem}

Knowing $E^*_1(\tilde f)$, we can find the first-order upper
Exhauster of the function $f(\cdot)$ at $x_0$. The following
theorem is proved in \cite{proudexhaust}.

\begin{thm} The upper
Exhauster $E^*({f})$ of ${f}(\cdot)$ is defined by the equality
$$
E^*({f})= \{w + C \vl w \in L B^n_1(0), C \in E^*(\tilde{f})\},
$$
where $ E^*(\tilde{f})$ is the upper Exhauster of
$\tilde{f}(\cdot)$ at $x_0$, which we can construct according to
Theorem \ref{thmexh1}. \label{thmexh2}
\end{thm}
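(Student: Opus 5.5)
The plan is to push the exhaustive equality for $\tilde f$ back to $f$ via the directional derivative of the norm term. Since $f(x)=\tilde f(x)-L\|x-x_0\|$ and the norm is differentiable in directions at $x_0$, with $\|x_0+\al g-x_0\|=\al\|g\|$, we get
$$
f'(x_0,g)=\tilde h(g)-L\|g\| \quad \forall g\in\mathbb{R}^n .
$$
The term $-L\|g\|$ is concave. It can, however, itself be written as an infimum of linear functions:
$$
-L\|g\|=\min_{w\in LB^n_1(0)}(w,g),
$$
and each $(w,g)$ is the support function of the singleton $\{w\}$.

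Next, apply Theorem~\ref{thmexh1}: $\tilde h(g)=\inf_{C\in E^*(\tilde f)}\max_{v\in C}(v,g)$. Adding the two infima gives
$$
f'(x_0,g)=\inf_{C\in E^*(\tilde f)}\ \inf_{w\in LB^n_1(0)}\Bigl(\max_{v\in C}(v,g)+(w,g)\Bigr)
=\inf_{C,\,w}\ \max_{u\in w+C}(u,g).
$$
This uses the additivity of support functions under Minkowski sums, $h_{w+C}=h_C+(w,\cdot)$. Each set $w+C$ is convex and compact, because it is a translate of a convex compact $C$. Hence each $h_{w+C}$ is a convex, finite, positively homogeneous function.

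It remains to check the UCA property $h_{w+C}(g)\ge f'(x_0,g)$. This is immediate from the displayed equality, since each term dominates the infimum. The equality itself holds for all $g$, and in particular on $S^{n-1}_1(0)$. So the family $\{w+C\}$ is an upper Exhauster of $f$ at $x_0$, and boundedness of the family follows from boundedness of $E^*(\tilde f)$ together with $\|w\|\le L$.

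The only delicate point is justifying that $\inf_{a}A_a+\inf_b B_b=\inf_{a,b}(A_a+B_b)$. This holds for real-valued terms, and the infimum over $w$ is attained at $w=-Lg/\|g\|$. One should also note that $\tilde f$ inherits the Lipschitz property (constant $2L$), so Theorem~\ref{thmexh1} applies to it. I expect no substantive obstacle beyond these remarks.
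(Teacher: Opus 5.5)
Your proof is correct and follows essentially the paper's route. The paper defers this theorem to \cite{proudexhaust}, but its derivation of the lower Exhauster just before Theorem~\ref{thmexh3} is the same computation you carry out here: write the norm term as a support function of $LB^n_1(0)$ (for you, $-L\|g\|=\min_{w\in LB^n_1(0)}(w,g)$) and merge it with the inf--max representation of Theorem~\ref{thmexh1} through the Minkowski sums $w+C$.
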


Now we construct a lower Exhauster $E_*({f})$ at $x_0$ of the
Lipschitz function ${f}(\cdot)$. We define the function
$\bar{h}(\cdot):\mathbb{R}^n \rar \mathbb{R}$
$$
\bar{h}(g)= {h}(g)-L \| g \|.
$$
We denote by $E^*(-\bar{h})$ the upper Exhauster of the function
$-\bar{h}(\cdot)$. We already know that
$$
-\bar{h}(g)=\min_{C \in E^*(-\bar{h})} \max_{v \in C} (v,g)
$$
or
$$
\bar{h}(g)=-\min_{C \in E^*(-\bar{h})} \max_{v \in C} (v,g)=
\max_{C \in E^*(-\bar{h})} [-\max_{v \in C} (v,g)]=
$$
$$
=\max_{C \in E^*(-\bar{h})} \min_{v \in (- C)} (v,g).
$$
We have
$$
{h}(g)= \bar{h}(g) + \max_{v \in LB^n_1(0)}(v,g)= \max_{C \in
E^*(-\bar{h})} \min_{v \in (- C)} (v,g) + \max_{w \in
LB^n_1(0)}(w,g)=
$$
$$
=\max_{C \in E^*(-\bar{h})} [\max_{w \in LB^n_1(0)} \min_{v \in
w+(-C)} (v,g) ]= \max_{ C \in [ -E^*(-\bar{h})+w \vl w \in
LB^n_1(0) ]} \min_{v \in C} (v,g).
$$
From this we obtain the following theorem.
\begin{thm}
The lower Exhauster $E_*({f})$ of the Lipschitz function
${f}(\cdot)$ at $x_0$ is the set
$$
E_*({f})= \{w + C \vl w \in L B^n_1(0), C \in -E^*(-\bar{h}) \},
$$
where $ E^*(-\bar{h})$ is the upper Exhauster of the function
$-\bar{h}(\cdot)$ at zero, which we can construct according to
Theorem \ref{thmexh1}. \label{thmexh3}
\end{thm}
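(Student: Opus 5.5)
We have to show that the family $E_*(f)=\{w+C \vl w\in LB^n_1(0),\ C\in -E^*(-\bar h)\}$ is a lower Exhauster of $f(\cdot)$ at $x_0$. That means every member is the superdifferential of a concave p.h. function, and
$$
h(g)=f'(x_0,g)=\sup_{D\in E_*(f)}\min_{v\in D}(v,g) \quad \forall g\in S^{n-1}_1(0).
$$
The plan is to verify the chain of equalities displayed just before the statement, justifying each step rather than treating the formulas as formal manipulations.

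\textbf{Step 1: existence of $E^*(-\bar h)$.} Since $f$ is Lipschitz with constant $L$, the function $h(\cdot)=f'(x_0,\cdot)$ is finite and Lipschitz in $g$, and so is $\bar h(g)=h(g)-L\|g\|$. Hence $-\bar h$ is a finite, continuous, p.h. function, and $-\bar h(g)=L\|g\|-h(g)\ge 0$ because $|h(g)|\le L\|g\|$. Then $-\bar h$ plays exactly the role of $\tilde h$ in Theorem \ref{thmexh1}: it is the directional derivative at $x_0$ of the Lipschitz function $\hat f(x)=-f(x)+L\|x-x_0\|$. Therefore Theorem \ref{thmexh1} (or Theorem \ref{thmexh0}) gives a bounded family $E^*(-\bar h)$ of convex compact sets with
$$-\bar h(g)=\inf_{C\in E^*(-\bar h)}\max_{v\in C}(v,g).$$

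\textbf{Step 2: sign change.} Taking negatives gives $\bar h(g)=\sup_{C}\min_{v\in -C}(v,g)$. Each map $g\mapsto\min_{v\in -C}(v,g)$ is concave and p.h., with superdifferential $-C$.

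\textbf{Step 3: adding the norm term.} Write $L\|g\|=\max_{w\in LB^n_1(0)}(w,g)$. For each fixed $C$ and $w$ we have the identity $\min_{v\in w-C}(v,g)=(w,g)+\min_{v\in -C}(v,g)$. Hence
$$
\sup_C\sup_{w}\min_{v\in w-C}(v,g)=\sup_C\Bigl[\min_{v\in -C}(v,g)+\max_w(w,g)\Bigr]=\bar h(g)+L\|g\|=h(g).
$$
This step is elementary because the supremum over $w$ simply separates.

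\textbf{Step 4: the members are legitimate.} Each set $w-C$ is convex and compact, so it is the superdifferential of the concave p.h. function $g\mapsto\min_{v\in w-C}(v,g)$. The collection is bounded because $E^*(-\bar h)$ and $LB^n_1(0)$ are bounded. Together with Step 3 this shows that $E_*(f)$ is a lower Exhauster.

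The main obstacle is Step 1: checking that $-\bar h$ genuinely satisfies the hypotheses of Theorem \ref{thmexh1}. If the realization through $\hat f$ is awkward, the fallback is Theorem \ref{thmexh0}, since $-\bar h$ is continuous and finite. I also need to confirm that "inf" may be written as "min", as the paper does; boundedness and compactness of the family should give attainment, but the sup/inf form suffices for the definition anyway.
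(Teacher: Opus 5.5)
Your proposal is correct and matches the paper's argument. The paper likewise negates the upper Exhauster of $-\bar h$ to get $\bar h(g)=\max_{C}\min_{v\in -C}(v,g)$, then absorbs $L\|g\|=\max_{w\in LB^n_1(0)}(w,g)$ by translating each $-C$ by $w$; your Step 1, which identifies $-\bar h$ as the directional derivative of the Lipschitz function $-f(x)+L\|x-x_0\|$ so that Theorem \ref{thmexh1} applies to $-f$, makes explicit what the paper only asserts with ``we already know''.
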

\begin{cor}
By definition, the Biexhauster of the Lipschitz function
${f}(\cdot)$ at the point $x_0$ is a pair $[A,B]$ of the sets $A$,
$B$, where
$$
A= \{ w + C \vl w \in L B^n_1(0), C \in E^*(\tilde{h}) \}, \, B=
\{ w + C \vl w \in L B^n_1(0), C \in -E^*(-\bar{h}) \}
$$
\end{cor}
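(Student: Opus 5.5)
The corollary follows by pairing the upper Exhauster of Theorem \ref{thmexh2} with the lower Exhauster of Theorem \ref{thmexh3}, since a Biexhauster is by definition such a pair. The plan is to check that $A$ is an upper Exhauster of $f(\cdot)$ at $x_0$ and $B$ a lower one, i.e.
$$
h(g)=f'(x_0,g)=\inf_{C\in A}\max_{v\in C}(v,g)=\sup_{C\in B}\min_{v\in C}(v,g) \quad \forall g \in S^{n-1}_1(0).
$$

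For $A$ I start from $\tilde h(g)=h(g)+L\|g\|$, which holds because the directional derivative of $L\|x-x_0\|$ at $x_0$ in direction $g$ is $L\|g\|$. By Theorem \ref{thmexh1} and the Remark after it, $E^*(\tilde f)=E^*(\tilde h)$, so $\tilde h(g)=\inf_{C\in E^*(\tilde h)}\max_{v\in C}(v,g)$. Then $h(g)=\tilde h(g)-L\|g\|$. I would follow the argument of Theorem \ref{thmexh2} to express this as $\inf$ over the sets $w+C$, $w\in LB^n_1(0)$, $C\in E^*(\tilde h)$, using $\max_{v\in w+C}(v,g)=(w,g)+\max_{v\in C}(v,g)$. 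The infimum over $w$ gives $-L\|g\|$, attained at $w=-Lg$. That this infimum gives equality, and not only an upper bound, is exactly the content of Theorem \ref{thmexh2}, which I take as given.

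For $B$ I would repeat the computation displayed just before Theorem \ref{thmexh3}. Write $\bar h(g)=h(g)-L\|g\|$. Apply Theorem \ref{thmexh1} to $-\bar h$: it is positively homogeneous and Lipschitz, of the form $-h+L\|\cdot\|$, so the cutting-plane construction applies. Pass to $\bar h=\max_{C}\min_{v\in -C}(v,g)$, and add back $L\|g\|=\max_{w\in LB^n_1(0)}(w,g)$, absorbing the $w$ into the sets. This gives Theorem \ref{thmexh3}, so $B$ is a lower Exhauster.

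The main obstacle is the interchange of the outer $\max_C$ with $\max_w$ when moving $w$ inside $\min_{v\in w-C}$. This is legitimate because $\min_{v\in w-C}(v,g)=(w,g)+\min_{v\in -C}(v,g)$ is additive in $w$, so the two maxima separate and commute. I would verify that additivity explicitly. With that, both components are Exhausters and the pair $[A,B]$ is the Biexhauster.
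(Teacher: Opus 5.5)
Your proposal is correct and follows the paper's argument: the corollary simply pairs the upper Exhauster of Theorem~\ref{thmexh2} (using $E^*(\tilde f)=E^*(\tilde h)$ from the Remark after Theorem~\ref{thmexh1}) with the lower Exhauster of Theorem~\ref{thmexh3}, whose derivation you reproduce. One minor correction: the equality $\inf_{w,C}\,[(w,g)+\max_{v\in C}(v,g)]=-L\|g\|+\tilde h(g)=h(g)$ is immediate because the infimum over the product $LB^n_1(0)\times E^*(\tilde h)$ separates, so what Theorem~\ref{thmexh2} actually supplies is that each $w+C$ still majorizes $h$; this holds since $(w,g)\ge -L\|g\|$, and dually $(w,g)\le L\|g\|$ makes each member of $B$ minorize $h$.
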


%%%%%%%%%%%%%%%%%%%%%%%%%%%%%%%%%%%%%%%%%%%%%%%%%%%%%%%%%%%%

\section{The lower Frechet subdifferential of the second order. The upper second-order
Exhauster}

The lower Frechet subdifferential  of the first order of a
function $f(\cdot)$ at a point $x_0$ is defined as follows
\cite{ioffepenot}
$$
\p_F^- f(x_0)=\{ v \in \mathbb{R}^n \vl \liminf_{\| g \| \rar 0}
\frac{1}{\| g \|} (f(x_0 + g)-f(x_0)-(v,g)) \meq 0 \}.
$$
Here $(v, g)$ is the scalar product of vectors $v$ and $g$.

There is a relationship between the lower Frechet subdifferential
$\p_F^- f(x_0)$ and the first-order upper Exhauster $E_1^*(f)$ at
the point $x_0$ \cite{roshchina}, \cite{demroshina}. This
relationship is given by the relation \be \bigcap_{C \subset
E_1^*(f)} C = \p_F^- f(x_0). \label{exsecond1} \ee

The relationship between the upper Frechet subdifferential $\p_F^+
f(x_0)$ and the first-order lower Exhauster is written as follows
\cite{roshchina}, \cite{demroshina}

\be \bigcap_{C \subset E_{1*}(f)} C = \p_F^+ f(x_0).
\label{exsecond2} \ee

The equalities (\ref{exsecond1}) and (\ref{exsecond2}) were given
in \cite{roshchina}.

Some function $f(\cdot)$ is called Frechet subdifferentiable if
$\p_F^- f(x_0)$ is nonempty. Also, $f(\cdot)$ is called Frechet
superdifferentiable if $\p_F^+ f(x_0)$ is nonempty. It is known
that the sets $\p_F^- f(x_0)$ and $\p_F^+ f(x_0)$ are nonempty if
and only if $f(\cdot)$ is Frechet differentiable at $x_0$ and then
$\p_F^- f(x_0)=\p_F^+ f(x_0)=\{ f' (x_0) \}$ \cite{kruger}.

The second-order Frechet lower subdifferential was introduced in
\cite{penot}, \cite{ioffepenot} \be \p_-^2 f(x_0)=\{ [v,A] \vl
\liminf_{\| g \| \rar 0} \frac{1}{\| g \|^2} (f(x_0+
g)-f(x_0)-(v,g)-\frac{1}{2}(Ag,g)) \meq 0 \}. \label{exsecond3}
\ee It is clear from the definition that $ \p_-^2 f(x_0)$ is a
convex closed set.

We define the function $h^+_{\la}(\cdot): \mathbb{R}^n \rar
\mathbb{R}$
$$
h_{\la}^+(g)=\max_{[ v,A] \in \p^2 h^+_{\la}(0)} [(v,g)+
\frac{1}{2}(Ag,g) ],
$$
where $\p^2 h^+_{\la}(0) \subset \mathbb{R}^n \times
\mathbb{R}^{n^2}$ is a convex compact set called the second
subdifferential of the function $h_{\la}^+(\cdot)$ at zero. Here
$(v,g)$ is the scalar product of vectors $v$ and $g$.

%%%%%%%%%%%%%%%%%%%%%%%%%%%%%%%%%%%%%%%%%%%%%%%%%%%%%%%%%%%%%%
We say that $h_{\la}^+(\cdot)$ is a second-order upper
approximation of the function $f(\cdot)$ at the point $x_0$ if
there exists $\al_0 >0$ such that \be f(x_0+\al g) - f(x_0) \leq
h_{\la}^+(\al g) + o_{\la}(\al^2 g^2) \,\,\, \forall g \in
S^1_1(0), \,\,\, \forall \al \in [0, \al_0], \al_0>0,
\label{exsecond4} \ee where

\be \lim_{\al \rar 0^+} \frac{o_{\la}(\al^2 g^2)}{\al^2} =0
\label{exsecond5} \ee uniformly in $g \in B^{n}_1(0)$, \, $g^2 =
\| g\|^2$.

We say that a set of functions $h_{\la}^+(\cdot)$, $\la \in \La$,
forms an upper second-order Exhauster $E^*_{2}(f)$ of the function
$f(\cdot)$ at the point $x_0$ if the inequality (\ref{exsecond4})
holds for any $\la \in \La$, where $\La$ is a finite or infinite
set of indices $\la$, and also \be f(x_0+\al g)- f(x_0) =
\inf_{\la \in \La} h^+_{\la} (\al g) + o (\al^2 g^2) \,\,\,\,
\forall g \in B^n_1(0), \label{exsecond5a} \ee where $ o(\cdot) $
satisfies conditions similar to those (\ref{exsecond5}).

Note that if the set of functions $h^+_{\la}(\cdot)$, $\la \in
\La$, forms the second-order upper Exhauster, then the set of
functions $h^+_{1\la}(\cdot):\mathbb{R}^n \longrightarrow
\mathbb{R}$, $\la \in \La$,
$$
h^+_{1\la}(g)=\max_{v \in \p h^+_{1\la}(0)} (v,g),
$$
where $ \p h^+_{1\la}(0)=\{ v \vl \exists A: (v,A) \in \p^2
h^+_{\la}(0) \}$, forms the first-order upper Exhauster
$E_1^*(h_1)$ of the function $h_1(g)=f'(x_0,g)$ at the point $0$.

Our goal will be to establish a connection between the second
subdifferentials $\p^2 h^+_{\la}(0)$ of the functions
$h_{\la}^+(\cdot)$, $\la \in \La$, and the second-order lower
Frechet subdifferential $\p^2_- f(x_0)$.

Everywhere below, we assume that $f(\cdot)$ has the finite second
derivative $f''(x_0, g, g)$ in the directions $g \in \mathbb{R}^n$
in the sense that
$$
\sup_{\| g \| \leq 1} \vl f''(x_0, g, g) \vl < \infty
$$
and the equality
$$
f(x_0 + \al g)= f(x_0) +\al f'(x_0,g)+ \frac{\al^2}{2}f''(x_0, g,
g)+ o(\al^2 g^2)
$$
holds where $o(\cdot)$ satisfies (\ref{exsecond5}).

%%%%%%%%%%%%%%%%%%%%%%%%%%%%%%%%%%%%%%%%%%%%%%%%%%%%%%%%%%%

\begin{thm}
The equality
$$
\p^2_- f(x_0) =\bigcap_{C_{\la} \in E^*_2(f)} C_\la,
$$
where $ C_{\la} = \p^2 h_{\la}^+(0)$, $ \la \in \La $, which form
the second-order upper Exhauster $E^*_2(f)$ of the function
$f(\cdot)$ at the point $x_0$. \label{thmexh3-1}
\end{thm}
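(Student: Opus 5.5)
We prove the two inclusions separately, modelled on the first-order relation (\ref{exsecond1}). Throughout we write $\phi_{[v,A]}(g)=(v,g)+\frac{1}{2}(Ag,g)$. Then $h^+_\la(g)=\max_{[v,A]\in C_\la}\phi_{[v,A]}(g)$ with $C_\la=\p^2 h^+_\la(0)$, and $[v,A]\in\p^2_-f(x_0)$ means $f(x_0+g)-f(x_0)\meq \phi_{[v,A]}(g)+o(\|g\|^2)$.

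\emph{Inclusion $\p^2_- f(x_0)\subset\bigcap_\la C_\la$.} Take $[v,A]\in\p^2_-f(x_0)$ and fix $\la$. From (\ref{exsecond4}) and the definition of the lower subdifferential we get
$$
\phi_{[v,A]}(\al g)\leq f(x_0+\al g)-f(x_0)+o(\al^2)\leq h^+_\la(\al g)+o(\al^2),
$$
uniformly in $g$ with $\|g\|\le 1$. Next we need to remove the $o(\al^2)$ term. Set $u=\al g$ and let $\al\rar 0^+$. First divide by $\al$: this gives $(v,g)\leq h^+_{1\la}(g)$. If this inequality is strict for some $g$, the second order term gives no information in that direction. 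If it is an equality, divide the remainder by $\al^2$; this compares the quadratic parts along the face of $C_\la$ on which $(\cdot,g)$ attains its maximum. We then want to conclude that $[v,A]\in C_\la$ by a separation argument. If $[v,A]\notin C_\la$, then since $C_\la$ is convex and compact there is a linear functional on $\mathbb{R}^n\times\mathbb{R}^{n^2}$ separating the point from $C_\la$. We must show that this functional can be realized as an evaluation $[w,B]\mapsto(w,g)+\frac12(Bg,g)$ at some $g$, possibly after rescaling $g=\al g_0$. This is the main obstacle. Such evaluations span only the functionals $(w,g)+\frac12\langle B,gg^T\rangle$, so the "second subdifferential" must be understood modulo the part that evaluations cannot see (for example, with $A$ taken symmetric). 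We would try first to resolve this by identifying $C_\la$ with its canonical representative: the closed convex hull of all $[w,B]$ with $\phi_{[w,B]}\le h^+_\la$ near $0$ up to $o(\|g\|^2)$, in the spirit of Minkowski duality used in Section 1. With that normalization, the inequality displayed above says exactly that $[v,A]$ belongs to this hull, so $[v,A]\in C_\la$.

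\emph{Inclusion $\bigcap_\la C_\la\subset \p^2_-f(x_0)$.} Let $[v,A]\in C_\la$ for every $\la$. Then $\phi_{[v,A]}(\al g)\le h^+_\la(\al g)$ for all $\la$, hence $\phi_{[v,A]}(\al g)\leq\inf_\la h^+_\la(\al g)$. By (\ref{exsecond5a}) the right-hand side equals $f(x_0+\al g)-f(x_0)+o(\al^2)$, uniformly in $g\in B^n_1(0)$. Writing $u=\al g$ with $\|u\|\to0$ gives
$$
\liminf_{\|u\|\rar0}\frac{1}{\|u\|^2}\bigl(f(x_0+u)-f(x_0)-\phi_{[v,A]}(u)\bigr)\meq 0,
$$
so $[v,A]\in\p^2_-f(x_0)$ by (\ref{exsecond3}). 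The uniformity of $o(\cdot)$ in (\ref{exsecond5a}) is what lets us pass from rays to arbitrary $u\to0$.

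Thus the easy direction is the second one. The difficult step is the first one, namely converting the asymptotic domination $\phi_{[v,A]}\le h^+_\la+o(\|\cdot\|^2)$ into membership in $C_\la$. This requires fixing the representation of $h^+_\la$ by its maximal (canonical) second subdifferential and carrying out the separation argument described above.
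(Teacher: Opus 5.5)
\paragraph{A gap that cannot be closed.} Your inclusion $\bigcap_\la C_\la\subset\p^2_-f(x_0)$ is correct and coincides with the paper's argument for (\ref{exsecond6}). The other inclusion is where the gap is. No separation argument can close it, because $\p^2_-f(x_0)\subset\bigcap_\la C_\la$ is false as stated.

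If $[v,A]\in\p^2_-f(x_0)$, then $[v,A+B]\in\p^2_-f(x_0)$ for every matrix $B$ with $(Bg,g)\le 0$ for all $g$ (for instance $B=-tI$ with $t\ge 0$, or $B$ skew-symmetric). Hence $\p^2_-f(x_0)$ is unbounded as soon as it is nonempty, while $\bigcap_\la C_\la$ is an intersection of compact sets.

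Concretely, take $f\equiv 0$, $x_0=0$. The single function $h^+\equiv 0$, with $C=\{[0,0]\}$, is a second-order upper exhauster. Yet $\p^2_-f(0)=\{[0,A] : (Ag,g)\le 0 \ \forall g\}\neq\{[0,0]\}$.

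This is exactly the phenomenon you noticed. The domination $\phi_{[v,A]}\le h^+_\la+o(\|\cdot\|^2)$ survives replacing $A$ by $A+B$ for any such $B$, so it can never force $[v,A]$ into a compact set. Restricting to symmetric $A$ does not help, since $[0,-I]$ is symmetric. What your first step legitimately yields is $v\in\p h^+_{1\la}(0)$ for every $\la$, plus the face inequality for the quadratic part. It does not yield $[v,A]\in C_\la$.

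\paragraph{The proposed normalization does not rescue the argument.} First, it replaces the given compact set $\p^2 h^+_\la(0)$ by a different, unbounded set, so you would be proving a different statement. Second, it breaks your other inclusion. That inclusion relied on the exact inequality $\phi_{[v,A]}\le h^+_\la$ in order to pass to $\inf_\la$; with $\la$-dependent remainders you cannot do this.

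Here is an example. Take $n=1$, $f\equiv 0$ and $C_k=\mbox{co}\,\{[1/k,-1],[-1/k,-1],[0,0]\}$, so that $h^+_k(g)=\max\{|g|/k-\frac12 g^2,\,0\}$.
\begin{itemize}
\item These functions form an exhauster: each $h^+_k\ge 0$ and $\inf_k h^+_k\equiv 0$.
\item For $|g|\le 1/k$ we have $\frac12 g^2\le h^+_k(g)$, so $[0,1]$ lies in every one of your canonical sets.
\item However, $[0,1]\notin\p^2_-f(0)$.
\end{itemize}

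For comparison, the paper's proof uses the same two-inclusion structure. It bridges this step for (\ref{exsecond7}) only by writing the chain of inequalities and concluding with ``Therefore''. So the step you isolated is a genuine defect of the statement itself, and only $\bigcap_\la C_\la\subset\p^2_-f(x_0)$ is actually provable.
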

{\bf Proof}. Let $h_{\la}^+(\cdot)$, $ \la \in \La,$ be the
exhauster $E^*_2(f)$ of the function $f(\cdot)$ at the point
$x_0$. For any $\la \in \La $ and $\al \in [0, \al_0]$, $g \in
B^n_1(0)$,
$$
f(x_0+\al g)-f(x_0) \leq h_{\la}^+(\al g) + o_{\la}(\al^2 g^2),
$$
where
$$
h_{\la}^+(g)=\max_{[ v,A] \in \p^2 h^+_{\la}(0)} [(v,g)+
\frac{1}{2}(Ag,g) ]
$$
is the  second-order upper approximation (UA) of the function
$f(\cdot)$ at $x_0$. That's why
$$
f(x_0+\al g)-f(x_0) \leq \inf_{\la \in \La} h_{\la}^+(\al g) +
o(\al^2 g^2),
$$
where $o(\al^2 g^2)= o_{\la_0}(\al^2 g^2) $, $\la_0 \in \La$.

Since $h_{\la}^+(\cdot)$, $ \la \in \La $, form  the second-order
Exhauster $E^*_2(f)$ of the function $f(\cdot)$ at the point
$x_0$, then
$$
\max_{[v,A] \in \bigcap C_{\la}} [\al (v,g)+\frac{\al^2}{2}
(Ag,g)] +o(\al^2 g^2) \leq f(x_0+\al g)-f(x_0) \,\,\,\, \forall
\al \in [0,\al_0], \, g \in B^n_1(0),
$$
where
$$
C_{\la}=\{ [w,B] \vl [ w,B] \in \p^2 h^+_{\la} (0) \}.
$$
From here \be \bigcap_{C_{\la} \in E^*_2 (f)} C_{\la} \subset
\p^2_- f(x_0). \label{exsecond6} \ee

On the other hand, for any $g \in B^n_1(0) $, $\al \in [0,\al_0],
$
$$
\max_{[v,A] \in \p^2_- f(x_0)} [\al(v,g)+\frac{\al^2}{2} (Ag,g)]
\leq f(x_0+\al g)-f(x_0) \leq h^+_{\la} (\al g) +o(\al^2 g^2)
\,\,\,\, \forall \la \in \La.
$$
Therefore, \be \p^2_- f(x_0) \subset \bigcap_{C_{\la} \in E^*_2
(f)} C_{\la}. \label{exsecond7} \ee

The statement of the theorem follows from (\ref{exsecond6}) and
(\ref{exsecond7}). $\Box$

%%%%%%%%%%%%%%%%%%%%%%%%%%%%%%%%%%%%%%%%%%%%%%%%%%%%%%%%%%%
%%%% Определение нижнего экзостера втрого порядка

\section{The upper Frechet subdifferential of the second order.
The lower second-order Exhauster}

We can introduce an upper second-order Frechet subdifferential
$$
\p_+^2 f(x_0)=\{ [w,B] \vl \limsup_{\| g \| \rar 0} \frac{1}{\| g
\|^2} (f(x_0+ g)-f(x_0)-(w,g)-\frac{1}{2}(B g,g)) \leq 0 \}.
$$
From the definition, it is clear that $ \p_+^2 f(x_0) $ is a
convex closed set.

We define the function $h^-_{\la}(\cdot): \mathbb{R}^n \rar
\mathbb{R}$
$$
h_{\la}^-(g)=\min_{[ v,A] \in \p^2 h^-_{\la}(0)} [(v,g)+
\frac{1}{2}(Ag,g) ],
$$
where $\p^2 h^-_{\la}(0) \subset \mathbb{R}^n \times
\mathbb{R}^{n^2}$ is a convex compact set, called the second
superdifferential of the function $h_{\la}^-(\cdot)$ at zero.
Here, as above, $(v,g)$ is the scalar product of vectors $v$ and
$g$.

We say that $h_{\la}^-(\cdot)$ is a second-order lower
approximation of the function $f(\cdot)$ at the point $x_0$ if
there exists $\al_0 >0$ such that \be f(x_0+\al g) - f(x_0) \meq
h_{\la}^-(\al g) + o_{\la}(\al^2 g^2) \,\,\, \forall g \in
S^1_1(0), \,\,\, \forall \al \in [0, \al_0], \al_0>0,
\label{exsecond7-2} \ee where \be \lim_{\al \rar 0^+}
\frac{o_{\la}(\al^2 g^2)}{\al^2} =0 \label{exsecond7-3} \ee
uniformly in $g \in B^{n}_1(0)$, \, $g^2 =\| g\|^2$.

We say that a set of the functions $h_{\la}^-(\cdot)$, $\la \in
\La$, forms the second-order lower exhauster $E_{*2}(f)$ of the
function $f(\cdot)$ at the point $x_0$ if inequality
(\ref{exsecond7-2}) is satisfied for any $\la \in \La$, where
$\La$ is a finite or infinite set of indices $\la$, and also
$$
f(x_0+\al g)- f(x_0) = \sup_{\la \in \La} h^-_{\la} (\al g) + o
(\al^2 g^2) \,\,\,\, \forall g \in B^n_1(0),
$$ where $ o(\cdot) $ satisfies conditions similar to those
(\ref{exsecond7-3}).

Note that if the set of functions $h^-_{\la}(\cdot)$, $\la \in
\La$, forms the second-order lower exhauster, then the set of
functions $h^-_{1\la}(\cdot):\mathbb{R}^n \longrightarrow
\mathbb{R}$, $\la \in \La$,
$$
h^-_{1\la}(g)=\min_{v \in \p h^-_{1\la}(0)} (v,g),
$$
where $ \p h^-_{1\la}(0)=\{ v \vl \exists A: (v,A) \in \p^2
h^-_{\la}(0) \}$, forms the first-order lower exhauster
$E_{1*}(h_1)$ of the function $h_1(g)=f'(x_0,g)$ at the point $0$.

Our goal will be to establish a connection between the second
superdifferentials $\p^2 h^-_{\la}(0)$ of the functions
$h_{\la}^-(\cdot)$, $\la \in \La$, and the second-order upper
Frechet subdifferential $\p^2_+ f(x_0)$.

Similarly to Theorem \ref{thmexh3-1}, we can prove the following
theorem.

\begin{thm}
The equality
$$
\p^2_+ f(x_0) =\bigcap_{C_{\la} \in E_{*2}(f)} C_\la
$$
is true, where $ C_{\la} = \p^2 h_{\la}^-(0)$, $ \la \in \La $ is
a finite or infinite set of indices of the functions $h_{\la}^-
(g)$ that form the second-order lower exhauster $E_{*2}(f)$ of the
function $f(\cdot)$ at the point $x_0$. \label{thmexh3-2}
\end{thm}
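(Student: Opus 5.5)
We follow the scheme of the proof of Theorem \ref{thmexh3-1}, with the inequalities reversed. Equivalently, we could apply Theorem \ref{thmexh3-1} to $-f(\cdot)$. The functions $-h^-_{\la}(\cdot)$ are second-order upper approximations of $-f(\cdot)$ at $x_0$, with second subdifferentials $-\p^2 h^-_{\la}(0)$. Moreover $\p^2_+ f(x_0) = -\p^2_-(-f)(x_0)$, because $\limsup \Phi \leq 0$ is the same as $\liminf(-\Phi)\meq 0$. Applying Theorem \ref{thmexh3-1} to $-f$ and changing the sign of every set gives the claim. Since that proof is only sketched, we also outline the direct argument.

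\emph{Step 1: $\bigcap_{\la} C_\la \subset \p^2_+ f(x_0)$.} Take $[w,B] \in \bigcap_{\la\in\La} C_\la$. For every $\la$ and every $g$,
$$
h^-_\la(\al g) = \min_{[v,A]\in C_\la}\Bigl[\al(v,g)+\frac{\al^2}{2}(Ag,g)\Bigr] \leq \al(w,g)+\frac{\al^2}{2}(Bg,g).
$$
Taking the supremum over $\la$ and using the exhauster equality $f(x_0+\al g)-f(x_0)=\sup_\la h^-_\la(\al g)+o(\al^2 g^2)$, we get
$$
f(x_0+\al g)-f(x_0)-\al(w,g)-\frac{\al^2}{2}(Bg,g) \leq o(\al^2g^2)
$$
uniformly in $g\in S^{n-1}_1(0)$. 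Writing an arbitrary small increment as $\al g$ with $\|g\|=1$ and $\al=\|\al g\|$, the $\limsup$ in the definition of $\p^2_+ f(x_0)$ is $\leq 0$. The uniformity in (\ref{exsecond7-3}) is exactly what turns the directional estimate into a Fréchet-type one.

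\emph{Step 2: $\p^2_+ f(x_0) \subset C_\la$ for each $\la$.} Fix $[w,B]\in\p^2_+ f(x_0)$. Then $f(x_0+\al g)-f(x_0)\leq \al(w,g)+\frac{\al^2}{2}(Bg,g)+o(\al^2)$. Combining this with (\ref{exsecond7-2}) gives
$$
\min_{[v,A]\in C_\la}\Bigl[\al(v,g)+\frac{\al^2}{2}(Ag,g)\Bigr] \leq \al(w,g)+\frac{\al^2}{2}(Bg,g)+o(\al^2)
$$
for all unit $g$ and small $\al$. We must conclude $[w,B]\in C_\la$.

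This last deduction is the main obstacle, and it is glossed over in the proof of Theorem \ref{thmexh3-1}. An inequality between a min of quadratic forms and one quadratic form does not automatically give membership in the compact convex set $C_\la$. The plan is to argue by contradiction with a separation argument. If $[w,B]\notin C_\la$, separate it strictly from $C_\la$ in $\mathbb{R}^n\times\mathbb{R}^{n^2}$ by a linear functional. Then try to realize that functional through the pairing $(v,g)\al+\frac{\al^2}{2}(Ag,g)$ for suitable $g$ and $\al$. This is possible only for functionals of the form $[\al g, \frac{\al^2}{2} g g^T]$. So the argument likely needs either the convention that $C_\la$ is "saturated", that is, maximal with the same function $h^-_\la$, or a restriction to symmetric $A$ together with an averaging or convex-hull argument over directions. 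First we would try to identify $C_\la$ with the maximal set $\{[v,A] : h^-_\la(\al g)\leq \al(v,g)+\frac{\al^2}{2}(Ag,g)\ \forall g,\al\}$. We would then check that the conclusion holds under that identification, so that the theorem reads as stated modulo this normalization.
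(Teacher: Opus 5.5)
Your Step 1 is correct, and so is the sign bookkeeping in the reduction to $-f$. This is also the paper's route: its proof of this theorem is only ``similarly to Theorem \ref{thmexh3-1}'', and there the second inclusion is simply asserted after the chain of inequalities.

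The genuine gap is your Step 2, and it cannot be closed, because the inclusion $\p^2_+ f(x_0)\subset C_\la$ is false whenever $\p^2_+ f(x_0)\neq\emptyset$. If $[w,B]\in\p^2_+ f(x_0)$, then $[w,B+tI]\in\p^2_+ f(x_0)$ for every $t\meq 0$, since the quotient in the definition decreases by $t/2$. So $\p^2_+ f(x_0)$ is unbounded, whereas $\bigcap_\la C_\la$ lies in a single compact $C_\la$. Concretely, for $f\equiv 0$ the single function $h^-\equiv 0$, with $C=\{[0,0]\}$, is a second-order lower exhauster. Yet $\p^2_+ f(x_0)=\{[0,B] \vl (Bg,g)\meq 0 \ \forall g\}$. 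Thus the stated equality holds only when both sides are empty. Passing to $-f$ merely moves the same defect into Theorem \ref{thmexh3-1}: for $f\equiv 0$, $\p^2_- f(x_0)$ contains every $[0,-tI]$, $t\meq 0$.

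Your diagnosis of why separation fails is right. The pairing only tests functionals $[\al g,\frac{\al^2}{2}gg^T]$, whose matrix part is positive semidefinite, and the Step 2 inequality is one-sided. So whenever $[w,B]$ satisfies it, every $[w,B+tI]$ does too. The inequality can therefore never exclude these points, although for large $t$ they leave the compact set $C_\la$.

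The saturation convention does not rescue the statement as posed. The maximal set $\{[v,A] \vl h^-_\la(\al g)\le\al(v,g)+\frac{\al^2}{2}(Ag,g)\}$ is closed under $A\mapsto A+tI$, hence unbounded and not an admissible compact $C_\la$. Moreover, $h^-_\la$ is not positively homogeneous, so Step 2 yields the inequality only up to $o(\al^2)$. The set that actually appears is therefore $\p^2_+ h^-_\la(0)$.

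What your two steps genuinely prove is
\[
\bigcap_{\la\in\La} C_\la \subset \p^2_+ f(x_0) \subset \bigcap_{\la\in\La} \p^2_+ h^-_\la(0).
\]
The second inclusion becomes an equality when $\La$ is finite, or, for infinite $\La$, when the remainders in the definition of $\p^2_+ h^-_\la(0)$ are uniform in $\la$. A statement of this form, not the equality as written, is the correct second-order counterpart of (\ref{exsecond2}).
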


By the definition of the second-order lower exhauster, for any $Q
\subset E_{*2}(f)$, the following relations hold:
$$
f(x+\Delta x) \meq f(x_0) + \min_{[v,A] \in Q} [ (v,\Delta
x)+\frac{1}{2}(A \Delta x, \Delta x) ] + o(\Delta x^2)
$$
and
$$
f(x+\Delta x) = f(x_0) + \sup_{Q \in E_{*2}(f)} \min_{[v,A] \in Q}
[ (v,\Delta x)+\frac{1}{2}(A \Delta x, \Delta x) ] + o(\Delta
x^2).
$$
%%%%%%%%%%%%%%%%%%%%%%%%%%%%%%%%%%%%%%%%%%%%%%%%%%%%%%%%%%%%%%%%%%%%%%
\begin{rem}
Generally speaking, the upper (lower) Frechet subdifferentials of
the first (second) orders are introduced for arbitrary functions
that are not necessarily differentiable in directions
\cite{ioffepenot}.
\end{rem}

%%%%%%%%%%%%%%%%%%%%%%%%%Конец опредления нижнего экзостера 2-ого порядка

\vspace{0.5cm}

\section{Theorem on existence of the
second-order upper Exhauster}

\vspace{0.5cm}

Consider the functions $h_1(\cdot):\mathbb{R}^n \rar \mathbb{R}$,
$h_1(g)=f'(x_0,g)$ and $h_2(\cdot):\mathbb{R}^n \rar \mathbb{R}$,
$h_2(g)=f''(x_0,g,g) \overset{\mathrm{def}}{=} \frac{\p^2
f(x_0)}{\p g^2}$. The function $h_1(\cdot)$ is p.h. the first
order, and the function $h_2(g)$ is p.h. of the second order.

Our problem is to construct the first-order upper exhauster
$E^*_1(h_1)$ of the function $h_1(\cdot)$ and the second-order
upper exhauster $E^*_2(h_2)$ of the function $h_2(\cdot)$ at the
point zero.

It is known \cite{demrub} that if $h_1(\cdot)$ is u.s.c. and is
bounded above on $B_1^n(0)$, that is,
$$
\sup_{\| g \| \leq 1} h_1(g) < \infty,
$$
then there exists the first-order upper Exhauster $E^*_1(h_1)$ at
the point zero. Obviously, $E^*_1(f)$ at the point $x_0$ coincides
with $E^*_1(h_1)$. The exhaustive set of UCA. of the first order
consists of some p.h. convex functions $h_{1 \la}(\cdot)$, $\la
\in \La$, $\La $ is  a set of indices is finite or infinite. By
the definition of the exhaustive set of upper convex
approximations, we have
$$
f(x_0+\al g) = f(x_0)+ \inf_{\la \in \La} h_{1 \la}(\al g) + o(\al
g),
$$
where $ o(\al g) / \al \rar 0$ as $\al \rar 0^+$ uniformly in $g
\in B_1^n(0)$.

We will show that if the functions $h_1(\cdot)$ and $h_2(\cdot)$
are u.s.c. and bounded above on $B_1^n(0)$, that is,
$$
\sup_{\| g \| \leq 1} h_i(g) < \infty, \,\,\,\, i=1,2,
$$
then there exists an exhaustive set of upper approximations (UA)
of the second order, called the second order upper Exhauster
$E^*_2(f)$, and consisting of functions $h^+_{\la}(\cdot)$, $\la
\in \La$, for which (\ref{exsecond5a}) holds.

To prove that there exists a second-order upper  Exhauster for the
function $h_2 (\cdot)$, we use Lemma 5.2 on pp. 171-172
\cite{demrub}.

Consider the case when  a p.h. of the second-order function
$h_2(\cdot)$ is twice continuously differentiable on
$S^{n-1}_1(0)$. In this case, we obtain the form of the function
$h_2(\cdot)$ as the infimum of the maxima of some quadratic
functions.
\begin{thm}
For a p.h. of the second-order function $h_2(\cdot)$, twice
continuously differentiable on $S_1^{n-1}(0)$, there exists the
upper Exhauster $E^*_2(h_2)$ of the second order  at zero,
consisting of p.h. of the second-order functions $h_{2
\mu}(\cdot)$ of the form
$$
h_{2 \mu}(g) = \max_{A \in {\cal A_{\mu}}} (Ag,g) \,\,\, \forall g
\in B^n_1(0),
$$
where $\cal A_{\mu}$ is a convex compact set of matrices, such
that \be h_2(g)=\inf_{\mu \in \cal M} h_{2 \mu}(g) = \inf_{\mu \in
\cal M} \, \max_{A \in {\cal A_{\mu}}} (Ag,g), \label{exsecond8}
\ee where $ \cal M$ is a finite or infinite set of indices.
\label{thmexh4}
\end{thm}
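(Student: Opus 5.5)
The plan is to construct, for every point $g_0\in S^{n-1}_1(0)$, one convex compact set of matrices ${\cal A}_{g_0}$. It should satisfy
$$
\max_{A\in{\cal A}_{g_0}}(Ag,g)\ \meq\ h_2(g)\ \ \forall g\in S^{n-1}_1(0),
\qquad
\max_{A\in{\cal A}_{g_0}}(Ag_0,g_0)=h_2(g_0).
$$
I would then take ${\cal M}=S^{n-1}_1(0)$ as the index set, so that the infimum in (\ref{exsecond8}) is attained at $\mu=g$ for each $g$ on the sphere. Both sides of (\ref{exsecond8}) are p.h. of the second order, so the equality extends from $S^{n-1}_1(0)$ to all of $B^n_1(0)$. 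Since each $h_{2\mu}$ is a maximum of quadratic forms, it is even. I therefore read ``p.h. of the second order'' as $h_2(tg)=t^2h_2(g)$ for all real $t$, which is the natural property of $f''(x_0,g,g)$ viewed as a quadratic-type form in $g$. Under this reading $h_2(-g)=h_2(g)$, and touching at $g_0$ automatically gives touching at $-g_0$.

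\textbf{Construction.} Let $P=I-g_0g_0^T$ be the projector onto $g_0^{\perp}$, and put $A_0=h_2(g_0)\,g_0g_0^T+M P$ with a constant $M$ to be chosen. Define
$$
{\cal A}_{g_0}=\{A_0+g_0w^T+wg_0^T \vl w\in g_0^{\perp},\ \|w\|\le K\},
$$
which is a convex compact set of symmetric matrices. A direct computation gives
$$
\max_{A\in{\cal A}_{g_0}}(Ag,g)=h_2(g_0)(g_0,g)^2+M\|Pg\|^2+2K\,|(g_0,g)|\,\|Pg\| .
$$
At $g=g_0$ this equals $h_2(g_0)$. The role of the term $2K|(g_0,g)|\,\|Pg\|$ is to be of \emph{first} order in the tangential displacement $Pg$. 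The term $M\|Pg\|^2$ alone is only second order and could not dominate a nonzero tangential derivative of $h_2$ at $g_0$.

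\textbf{Majorization on the sphere, which is the main step.} Write $g\in S^{n-1}_1(0)$ as $g=\cos\theta\,g_0+\sin\theta\,e$ with $e\perp g_0$ a unit vector and $\theta\in[0,\pi]$. Since $h_2$ is even, it suffices to treat $\theta\in[0,\pi/2]$. I would split this range into two cases.

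First, near $g_0$, say $\theta\le\theta_0$: since $h_2\in C^1$ on the compact sphere (the hypothesis gives $C^2$), there is $L$ independent of $g_0$ such that $h_2(g)\le h_2(g_0)+L\theta$. The majorant is at least $h_2(g_0)\cos^2\theta+2K\cos\theta\sin\theta$. The difference between the majorant and $h_2(g)$ is therefore at least
$$
2K\cos\theta\sin\theta-h_2(g_0)\sin^2\theta-L\theta ,
$$
and this is $\meq 0$ for $\theta\le\theta_0$ once $K$ is a large multiple of $L+\sup_{S^{n-1}_1(0)}|h_2|$ and $\theta_0$ is small. The $C^2$ bound can also be used here to make the choice of constants uniform.

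Second, away from $g_0$, where $\theta_0\le\theta\le\pi/2$: here $\|Pg\|^2=\sin^2\theta\ge\sin^2\theta_0$. Hence choosing $M\ge 2\sup_{S^{n-1}_1(0)}|h_2|/\sin^2\theta_0$ makes the $M\|Pg\|^2$ term dominate, because $h_2$ is bounded on the compact sphere.

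The constants $K$, $M$ and $\theta_0$ depend only on $\sup|h_2|$ and the Lipschitz constant of $h_2$ on the sphere, not on $g_0$. Therefore every ${\cal A}_{g_0}$ is a majorant touching $h_2$ at $g_0$. This yields (\ref{exsecond8}) with ${\cal M}=S^{n-1}_1(0)$, and the family $h_{2\mu}$ is the required upper Exhauster $E^*_2(h_2)$.
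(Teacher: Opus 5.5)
Your proof is correct, and it follows a genuinely different route from the paper's.

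\textbf{The two routes.} The paper lifts $g$ to $u=(g_ig_j)\in\mathbb{R}^{n^2}$ and views $h_2$ as a first-order p.h.\ function $\Theta$ of $u$ on the image of this map. It extends $\Theta$ upper semicontinuously to the sphere $S^{n^2-1}_1(0)$. It then invokes Rubinov's existence theorem for first-order upper exhausters, writing $\Theta=\inf_\mu \pi_\mu$ with sublinear $\pi_\mu(u)=\max_\nu (a_\nu,u)$. Substituting $u=(g_ig_j)$ back turns each $\pi_\mu$ into $\max_{A\in \mbox{co}\{A_\nu\}}(Ag,g)$.

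You instead build, for every $g_0$, an explicit convex compact set ${\cal A}_{g_0}$ whose quadratic max-function majorizes $h_2$ on the sphere and touches it at $g_0$. Your formula for $\max_{A\in{\cal A}_{g_0}}(Ag,g)$ is right, and both cases check out. For instance, take $\theta_0=\pi/4$; the bound $\sin 2\theta\ge 4\theta/\pi$ then shows that $4K/\pi\ge \sup|h_2|+L$ and $M\ge 4\sup|h_2|$ suffice.

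\textbf{What each route buys.} Your argument is elementary and constructive. The infimum is attained, the family is uniformly bounded, and you use only evenness plus a Lipschitz bound for $h_2$ on the sphere, not $C^2$. It also bypasses the steps the paper handles loosely: the u.s.c.\ extension of $\Theta$ off the image of $g\mapsto(g_ig_j)$, and the identification of $\Theta(u)$ with $(h_2''(g)g,g)$, which by Euler's identity equals $2h_2(g)$, not $h_2(g)$.

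The paper's route is only existential. Done carefully, however, it needs nothing beyond upper semicontinuity and boundedness of $h_2$, and that is exactly how it is reused for Theorem \ref{thmexh5}. Your touching construction cannot reach that generality. A maximum of quadratic forms over a compact set is locally Lipschitz, so it cannot touch at $g_0$ a continuous even $h_2$ that grows like $\sqrt{\theta}$ away from $g_0$; an example is $h_2(g)=\|g\|^{3/2}|g_2|^{1/2}$ at $g_0=e_1$. In such cases the infimum can only be approached.

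\textbf{On evenness.} You are right that evenness is necessary: every $\max_{A}(Ag,g)$ is even, and so is any infimum of such functions. With the paper's definition of p.h.\ (only $t>0$), the statement fails, for example for $h_2(g)=g_1\|g\|$. The paper's proof uses evenness tacitly when it treats $\Theta$ as a function of $u=(g_ig_j)$ alone.

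However, your justification that evenness is ``the natural property of $f''(x_0,g,g)$'' is wrong. Take $f(x)=x_1\|x\|$ at $x_0=0$: then $f''(0,g,g)=2g_1\|g\|$, which is odd. You should state evenness as an explicit additional hypothesis rather than present it as a reading of the definition.
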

{\bf Proof}. Take an arbitrary vector $g \in S^{n-1}_1(0)$ and
construct
$$
\hat f(x_0 + \al g)=f(x_0 + \al g) - f(x_0) - \al h_1(g)
=\frac{\al^2}{2} h_2(g)+o(\al^2 g),
$$
where $h_2(\cdot)$ is defined above. The function $h_2(\cdot)$ is
p.h. of the second-order, i.e., $h_2(t g)=t^2 h_2(g)$ for any
$t>0$. Clearly, that UA of the functions $h_2(\cdot)$ on $B_1^n
(0)$ must be sought among p.h. functions of the second-order,
since $h_2(\cdot)$ is one of them.

The simplest  p.h. function of the second order is a function of
the form $(Ag,g)$, where $A=A[n \times n]$ is a matrix of
dimension $n \times n$, $g \in \mathbb{R}^n$. Let $A=A(g)=\{
a_{ij}(g) \vl i,j \in 1:n \}$. Consider the function
$\Theta(\cdot): \mathbb{R}^{n\times n} \rar \mathbb{R}$
$$
\Theta(u)=\sum_{i,j=1}^n a_{i,j}(g) u_{ij},
$$
which is linear for fixed $g$ in $u \in \mathbb{R}^{n \times n}$,
$u=\{ u_{11}, u_{12}, \dots u_{nn} \}$. The vector $g$ is
expressed in terms of the coordinates of the vector $u=\{ g_i g_j
\vl i,j \in 1:n \}$. Therefore, $\Theta(\cdot)$ is considered as a
function of the vector $u \in \mathbb{R}^{n \times n}$.

If we set $u_{ij}=g_i g_j$, then
$$
\Theta(u)=\sum_{i,j=1}^n a_{ij}(g)g_i g_j =(A(g)g,g).
$$
If $A(g)=h_2''(g)$, $g \in S^{n-1}_1(0)$, then for $u_{ij}=g_i
g_j$ we will have
$$
\Theta(u)=f''(x_0,g,g).
$$
According to the conditions of the theorem, $\Theta(\cdot)$ is
u.s.c. with respect to the variable $ u=\{u_{ij} \vl i,j \in 1:n
\}$ where $u_{ij}=g_i g_j $. The function $\Theta(\cdot)$ can be
extended to the entire sphere $S^{n^2-1}_1(0)$, preserving upper
semicontinuity. Therefore, we assume that $\Theta(\cdot)$ is
u.s.c. at any point of the sphere $S_1^{n^2-1}(0)$.

According to Rubinov's theorem, \cite{demrub} there exists an
exhaustive set $E^*_1(\Theta)$ of the function $\Theta(\cdot)$ at
zero. The set $E^*_1(\Theta)$ consists of sublinear functions
$\pi_{\mu}(\cdot): \mathbb{R}^n \times \mathbb{R}^n \rar
\mathbb{R}$, for which

\be \Theta(u)=\inf_{\mu \in \cal M} \pi_{\mu}(u),
\label{exsecond9} \ee where $\cal M$ is a some set of indices.

If in (\ref{exsecond9}) we set $u_{ij}=g_i g_j$, we obtain the
equality
$$
f''(x_0,g,g)=\inf_{\mu \in \cal M} \left.\pi_{\mu}(u)\right\vert_{
u=\{g_i g_j \vl i,j \in 1:n \}}.
$$
Here $\pi_{\mu}(\cdot)$ are sublinear convex functions.  The
representation
$$
\pi_{\mu}(u)=\max_{\nu \in N(\mu)} (a_{\nu}, u), \,\,\, a_{\nu}
\in \mathbb{R}^{n^2}
$$
is correct for any $\mu \in \cal M $, the set $N(\mu)$ is a finite
or infinite set of indices $\nu $ for each $\mu \in \cal M$. Then,
for $u=\{g_i g_j \vl i,j \in 1:n \} $
$$
f''(x_0,g,g)=\inf_{\mu \in \cal M} \max_{\nu \in N(\mu)}
\left.(a_{\nu}, u) \right\vert_{ u=\{g_i g_j \vl i,j \in 1:n \}}.
$$
But for $ u= \{g_i g_j \vl i,j \in 1:n \}$
$$
\max_{\nu \in N(\mu)} (a_{\nu}, u) = \max_{A \in {\mbox{co}}\{
A_{\nu} \vl \nu \in N(\mu)\}} (Ag,g),
$$
where the matrices $A_{\nu}$ are composed of the vectors
$a_{\nu}\in \mathbb{R}^{n^2}$ in such a way that the equality
$$
(A_{\nu}g,g)=\sum_{i,j = 1}^n a_{\nu}^{ij} g_i g_j,
$$
holds if  $a_{\nu}=\{ a_{\nu}^{ij} \vl i,j \in 1:n \}$.

As a result, we obtain that
$$
f''(x_0,g,g)=\inf_{\mu \in \cal M} \max_{\nu \in N(\mu)}
(A_{\nu}g,g)= \inf_{\mu \in M} \max_{A \in {\mbox{co}}\{ A_{\nu}
\vl \nu \in N(\mu)\}} (Ag,g),
$$
that is, the function $h_2(\cdot)$ is the infimum of functions
that are sublinear with respect to $A_{\nu}$ and p.h. of the
second order with respect to $g$. Therefore, the existence of the
 Exhauster of the second-order for the function $h_2(\cdot)$ at zero is
proven. The theorem is proved. $\Box$
\begin{rem}
If $h_2(\cdot)$ is twice continuously differentiable on
$S^{n-1}_1(0)$ and p.h. of the second order, then there is the
lower exhauster $E_{*2}(h_2)$ of the second-order of $h_2(\cdot)$
at zero. It can be easily proved by replacing $h_2(\cdot)$ with
$-h_2(\cdot)$.
\end{rem}

Let $h_2(\cdot)$ be a bounded above and u.s.c. on $B^n_1(0)$, p.h.
of the second-order function. By the definition of the upper
semicontinuity  of the function $h_2(\cdot)$ at a point $g_0 \in
S^{n-1}_1(0)$, for an arbitrary $\eps>0$ there exists a $\del$
neighborhood $S_{\del}(g_0)$ of the point $g_0$  where the
inequality
$$
h_2(g) \leq h_2 (g_0) + \eps \,\,\,\, \forall g \in S_{\del}(g_0)
$$
holds.  Therefore, there exists an $\eps$ strip for $S_1^{n-1}(0)$
where the function $h_2(\cdot)$ can be approximated from above,
that means the inequality written above is satisfied,  by a twice
continuously differentiable p.h. of the second-order function, for
which there exists the upper second-order Exhauster at zero.
Therefore, the proven Theorem \ref{thmexh4} on the existence of
the upper second-order Exhauster $E^*_2(h_2)$ at zero will be true
for any u.s.c.  and bounded above on $S^{n-1}_1(0)$, p.h. of the
second-order function $h_2(\cdot)$. The boundedness of the
function $h_2(\cdot)$ on $S^{n-1}_1(0)$ guarantees that the upper
convex approximations will also be bounded on $S^{n-1}_1(0)$,
which follows from the proof of  Rubinov's theorem \cite{demrub},
pp. 171-172. Since we have reduced any p.h. of the second-order
function to a p.h. function of the first order, then from Theorem
\ref{thmexh0} we obtain
\begin{thm}
For any bounded, u.s.c. on $B^n_1(0)$, p.h. of the second order
function $h_2(\cdot)$, there exist the upper  $E^*_2(h_2)$ and the
lower $E_{*2}(h_2)$ Exhausters of the second-order at zero.

For any bounded, l.s.c. on $B^n_1(0)$, p.h. of the second order
function $h_2(\cdot)$, there exist the upper  $E^*_2(h_2)$ and the
lower $E_{*2}(h_2)$ Exhausters of the second-order at zero.
\label{thmexh5}
\end{thm}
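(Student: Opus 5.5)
The plan is to reduce the second-order statement to the first-order existence result, Theorem \ref{thmexh0}, by the same lifting used in the proof of Theorem \ref{thmexh4}, but without the smoothness assumption. For $g \in \mathbb{R}^n$ put $u(g)=\{g_i g_j \vl i,j \in 1:n\} \in \mathbb{R}^{n^2}$. The map $g \mapsto u(g)$ is continuous, and $u(tg)=t^2u(g)$. It is injective up to sign: $u(g)=u(g')$ iff $g'=\pm g$. It maps $S^{n-1}_1(0)$ onto a compact set $U \subset \{ u \vl \|u\|=1\}$, since $\|u(g)\|=\|g\|^2$.

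\textbf{Step 1: the lifted function.} Since $h_2$ is a p.h. function of the second order, I will first check whether $h_2(g)=h_2(-g)$. If it holds, I define on the cone $K=\{t u \vl t\ge 0, u \in U\}$ the function
$$
\Theta(tu(g))=t\,h_2(g).
$$
This $\Theta$ is well defined, p.h. of the first order, and bounded on $U$. Moreover $\Theta$ inherits upper (respectively lower) semicontinuity on $U$ from $h_2$. To see this, note that $u$ restricted to $S^{n-1}_1(0)$ is a closed continuous map onto $U$ with fibers $\{\pm g\}$. Hence if $u_k \to u(g)$, then after passing to subsequences the preimages converge to $\pm g$.

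If $h_2$ is not even, the lift is not well defined. In that case I plan to work directly on $S^{n-1}_1(0)$ with $g$ itself, using the argument in Step 3.

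\textbf{Step 2: extension to the whole unit sphere.} Next I extend $\Theta$ from the compact set $U$ to the whole sphere $S^{n^2-1}_1(0)$, preserving semicontinuity and boundedness. In the u.s.c. case I take
$$
\bar\Theta(u)=\inf_{w\in U}\bigl[\Theta(w)+M\|u-w\|\bigr]\ \ \text{(or } \bar\Theta=\sup\Theta \text{ off } U).
$$
More simply, I set $\bar\Theta=\sup_U\Theta$ off $U$; this is u.s.c. because $U$ is closed. In the l.s.c. case I instead use $\inf_U\Theta$ off $U$. I then extend $\bar\Theta$ positively homogeneously of degree one. Theorem \ref{thmexh0} now gives both an upper family $\pi_\mu$ (sublinear) and a lower family $q_\lambda$ (superlinear) with
$$
\bar\Theta=\inf_\mu \pi_\mu=\sup_\lambda q_\lambda.
$$

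\textbf{Step 3: pulling back to quadratic forms.} As in Theorem \ref{thmexh4}, I write $\pi_\mu(u)=\max_{a\in \p\pi_\mu(0)}(a,u)$. For $u=u(g)$ we have $(a,u(g))=(A_a g,g)$, so
$$
h_2(g)=\inf_\mu \max_{A\in\mathcal A_\mu}(Ag,g),
$$
where $\mathcal A_\mu$ is the compact convex image of $\p\pi_\mu(0)$. The representation holds on all of $B^n_1(0)$ by second-order homogeneity, which gives $E^*_2(h_2)$. Symmetrically, $q_\lambda(u)=\min_{a\in\p q_\lambda(0)}(a,u)$ yields
$$
h_2(g)=\sup_\lambda \min_{A\in\mathcal B_\lambda}(Ag,g),
$$
which gives $E_{*2}(h_2)$. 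The remainder terms $o(\al^2g^2)$ required by (\ref{exsecond4}) vanish here, because the representations are exact for the p.h. function $h_2$ itself.

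\textbf{Main obstacle.} I expect the main difficulty to be the non-even case, since quadratic forms cannot separate $g$ from $-g$. There I would enlarge the admissible approximations to $(v,g)+\frac12(Ag,g)$, in line with the definition of $h^+_\la$, and apply Rubinov's construction pointwise. For each $g_0$ and $\eps>0$, the u.s.c. bound $h_2\le h_2(g_0)+\eps$ near $g_0$ can be realized by a convex function of the form $\max$ of quadratics, equal to $h_2(g_0)$ at $g_0$ and large elsewhere on the sphere. This is possible because $h_2$ is bounded, so a constant-plus-penalty $\max_{A}(Ag,g)$ with a steep term like $M(\|g\|^2-(g,g_0)^2)$ dominates it. Taking the infimum over $g_0$ and $\eps$ then gives the upper exhauster. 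The lower exhauster is obtained by applying the same argument to $-h_2$, exactly as in Theorem \ref{thmexh0}.
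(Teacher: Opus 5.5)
Your Steps 1--3 are a cleaner version of the paper's reduction. The paper first majorizes $h_2$ by twice continuously differentiable p.h. functions to invoke Theorem \ref{thmexh4}, then passes to a first-order p.h. function on $\mathbb{R}^{n^2}$ and applies Theorem \ref{thmexh0}. You lift $h_2$ itself through $u=gg^T$, so the smooth majorants are not needed. You also notice something the paper misses: every function $\inf_\mu\max_{A\in\mathcal{A}_\mu}(Ag,g)$ is even. So this route, yours and the paper's, covers only even $h_2$, whereas $h_2(g)=f''(x_0,g,g)$ need not be even (e.g.\ $f(x)=x^2$ for $x\ge0$, $f(x)=2x^2$ for $x<0$). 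As written, however, the proposal has three gaps.

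\textbf{Step 2 (minor).} The extension goes the wrong way. For $n\ge2$ the set $U$ has empty interior in $S^{n^2-1}_1(0)$, so every $u\in U$ is a limit of points off $U$. Putting $\bar\Theta=\sup_U\Theta$ there gives $\limsup_{w\to u}\bar\Theta(w)=\sup_U\Theta>\Theta(u)$ whenever $\Theta(u)<\sup_U\Theta$, which destroys upper semicontinuity. The inf-convolution variant does not even agree with $\Theta$ on $U$ unless $\Theta(w)\ge\Theta(u)-M\|u-w\|$ on $U$. Extend by $\inf_U\Theta$ in the u.s.c. case and by $\sup_U\Theta$ in the l.s.c. case; with that fix the even case is fine.

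\textbf{Non-even case (the main gap).} The concrete majorant you describe fails. The penalty $M(\|g\|^2-(g,g_0)^2)$ is even and vanishes at $-g_0$, so your function takes the value $h_2(g_0)+\eps$ at $-g_0$, which need not dominate $h_2(-g_0)$. (At $g_0$ it must be $h_2(g_0)+\eps$, not $h_2(g_0)$.) No choice of quadratic forms repairs this. The linear part must separate $g_0$ from $-g_0$, and you must check that a first-order term beats a second-order deficit uniformly in $\al\in[0,\al_0]$. For instance, take $C_{g_0,\eps}=\mathrm{co}\{0,-cg_0\}\times\{A\}$ with $\frac12A=(h_2(g_0)+\eps)I+M(I-g_0g_0^T)$, i.e.
\[
h^+_{g_0,\eps}(\al g)=\al c\max\{0,-(g,g_0)\}+\al^2\bigl[(h_2(g_0)+\eps)\|g\|^2+M(\|g\|^2-(g,g_0)^2)\bigr].
\]
Let $K=\sup h_2-\inf h_2$ on $S^{n-1}_1(0)$ and take $\|g\|=1$.
\begin{itemize}
\item Near $g_0$, upper semicontinuity gives $h^+_{g_0,\eps}(\al g)\ge\al^2h_2(g)$.
\item For $(g,g_0)\le-\frac12$, the difference is at least $\al(c/2-\al K)\ge0$ once $c\ge2\al_0K$.
\item Elsewhere $1-(g,g_0)^2\ge\eta(\del)>0$, and $M\ge K/\eta$ suffices.
\end{itemize}
Since $h^+_{g_0,\eps}(\al g_0)=\al^2(h_2(g_0)+\eps)$, and replacing $g$ by $tg$ is absorbed into $\al$, you get $\inf_{g_0,\eps}h^+_{g_0,\eps}(\al g)=\al^2h_2(g)$ exactly on $B^n_1(0)$, with no remainder term. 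This is a direct second-order Rubinov argument. It works for every bounded u.s.c. $h_2$, even or not, so it makes Steps 1--3 and Theorem \ref{thmexh0} unnecessary.

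\textbf{Cross claims (shared with the paper).} In Step 2, and again in ``apply the same argument to $-h_2$'', you follow the paper in using Theorem \ref{thmexh0} to get a lower exhauster from upper semicontinuity, and vice versa. This cannot hold under the paper's definitions. Each $h^-_\la$ is continuous, so $\sup_\la h^-_\la(\al g)/\al^2$ is l.s.c. The uniform $o(\al^2)$ requirement makes $h_2$ the uniform limit of these functions, hence l.s.c. For a counterexample with $n\ge2$, let $h_2(g)=\|g\|^2$ on the ray through a fixed unit vector $e$ and $h_2=0$ elsewhere. This function is bounded and u.s.c. but admits no $E_{*2}(h_2)$; symmetrically for the upper exhauster of an l.s.c. function. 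Your argument applied to $-h_2$ needs $h_2$ to be l.s.c. What you can honestly prove is: u.s.c.\ gives $E^*_2(h_2)$, and l.s.c.\ gives $E_{*2}(h_2)$.
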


Under the assumptions about boundedness from above and u.s.c. of
the functions $h_1(\cdot)$, $h_2(\cdot)$, there exists the upper
second-order Exhauster $E^*_2(f)$ of the function $f(\cdot)$ at
the point $x_0$, consisting of the functions
$$
h^+_{\tau}(g)=h_{1\la}(g)+\frac{1}{2}h_{2\mu}(g),
$$
where $\tau=[\la,\mu]$ is a set of parameters, that is,
$$
f(x+g) \leq f(x_0)+h^+_{\tau}(g)+o(g^2) \,\,\, \forall g \in
B^n_1(0), \, \tau \in T=\La \times M,
$$
and \be f(x+g)=f(x_0)+\inf_{\tau \in T} h^+_{\tau}(g)+o(g^2).
\label{exsecond10} \ee

We have

\begin{conseq}
Under the conditions of Theorems  \ref{thmexh0} and \ref{thmexh5},
there exist the upper Exhausters $E^*_1(h_1)$ and $E^*_2(h_2)$ of
the first and second orders of the functions $h_1(\cdot)$ and
$h_2(\cdot)$ at zero respectively. Therefore, there exist the
upper Exhausters $E^*_1(f)$ and $E^*_2(f)$ of the first and second
orders of the function $f(\cdot)$ at the point $x_0$.
\end{conseq}

The question arises: how to construct $E^*_1(f)$ and $E^*_2(f)$?

The article \cite{proudexhaust} shows how to construct $E_1^*(f)$
for any Lipschitz function $f(\cdot): \mathbb{R}^n \rar
\mathbb{R}$ with Lipschitz constant $L$. We will use the results
of this article to construct the second-order Exhauster of the
function $f(\cdot)$ at the point $x_0$.

Note that for a Lipschitz directionally differentiable function
$f(\cdot)$, the function $h_1(g)=f'(x_0,g)$ is a continuous
function in $g$. Indeed,
$$
\lim_{u \rar g} | h(u)-h(g) | = \lim_{u \rar g}
\left\vert\lim_{\al \rar 0^+} \frac{f(x_0+\al u)-f(x_0)}{\al} -
\lim_{\al \rar 0^+} \frac{ f(x_0+\al g)-f(x_0)}{\al} \right \vert=
$$
$$
=\lim_{u \rar g} \lim_{\al \rar 0^+} \left\vert \frac{f(x_0+\al
u)-f(x_0)}{\al} - \frac{ f(x_0+\al g)-f(x_0)}{\al} \right \vert =
$$
$$
=\lim_{u \rar g} \lim_{\al \rar 0^+} \left\vert \frac{ f(x_0+\al
u)-f(x_0+\al g)}{\al} \right \vert \leq \lim_{u \rar g} \lim_{\al
\rar 0^+} L \| u -g \|=0,
$$
that is, the continuity of $h_1(\cdot)$ is proven.

The algorithm for constructing the upper Exhauster $E^*_1(f)$ of
the first order of the function $f(\cdot)$ at the point $x_0$ was
described above. It is clear that $E^*_1(f)=E^*_1(h_1)$. Next, we
proceed to constructing the second-order upper exhauster
$E^*_2(h_2)$.

We will assume that $h_2(\cdot)$ is a continuous p.h. of the
second-order function twice differentiable  almost everywhere
(a.e.) in $B^n_1(0)$. The set of points where $h_2''(\cdot)$
exists is denoted by ${\cal N}_2(h_2)$. For $g \in B^n_1(0) \cap
{\cal N}_2(h_2)$, the function $h_2(\cdot)$ has the form
$h_2(g)=(A(g) g, g)$. Clearly,  the equality $h''_2(g) = A(g)$
holds for such $g$.

The last equality, as mentioned earlier, can be represented in the
form of the p.h. function of the first order in the extended space
$$
\Theta(u)=(a(u), u) \,\,\,\, u \in \mathbb{R}^{n^2}.
$$
If we set $u=(g_1g_1, g_1g_2, \dots, g_n g_n) \in
\mathbb{R}^{n^2}$ and $a(u)=(a_{11},a_{12}, \dots, a_{nn}) \in
\mathbb{R}^{n^2}$, $a_{ij}(u)$ are the elements of the matrix
$A(g)$, then
$$
\Theta(u)=(A(g)g,g)=h_2(g)
$$
and
$$
A(g)=h''_2(g).
$$
We will assume that
$$
\max_{g \in B^n_1(0) \cap {\cal N}_2(h_2)} \| h''_2(g) \| \leq M.
$$
For $u=(g_1g_1, g_1g_2, \dots, g_n g_n)$ and $g \in B^n_1(0) \cap
{\cal N}_2(h_2)$
$$
\Theta(u)=h_2(g)
$$
and \be \| \Theta'(u)\| \leq M. \label{exsecond11a} \ee

The function $\Theta(\cdot)$ can be extended to the entire ball
$B^{n^2}_1(0)$ in such a way that inequality (\ref{exsecond11a})
held  for all $u \in B^{n^2}_1(0) \backslash \{0\}. $ The
Exhauster constructed for the function $\Theta(\cdot)$ at zero
will be the second-order Exhauster for the function $h_2(\cdot)$
for the corresponding $g$ and $u$, as discussed above.

Let
$$
\hat f(x)=f(x)-f(x_0)-h_1(x-x_0).
$$
The construction of the  Exhauster of the second-order will
proceed similarly to the construction of the  Exhauster of the
first-order for a p.h. function of the first order using the
cutoff method \cite{proudexhaust}.

We define
$$
\bar f(x)=\hat f(x)+\frac{1}{2}(M(x-x_0), x-x_0).
$$
Since the second directional derivative can be represented as a
scalar product, we can repeat the algorithm for constructing the
upper Exhauster for p.h. function of the first-order.

%%%%%%%%%%%%%%%%%%%%%%%%%%%%%%%%%%%%%%%%%%%%%%%%%%%%%%%%%%%%%%%

\section{Construction of the upper Exhauster of the second-order}

Assuming continuity of $h_2(\cdot)$ and a.e. differentiability on
$B^n_1(0)$, we give a rule for constructing the  upper Exhauster
of the second-order of the function $h_2(\cdot)$ at zero. As
before, we denote the set of differentiability points of
$h_2(\cdot)$ on $B^n_1(0)$ by ${\cal N}_2(h_2)$.

For an arbitrary vector $g\in S^{n-1}_1(0)$ and a matrix $A[n
\times n]$, we define the sets
$$
P_g(A)=\{ B[n \times n] \vl ( Bg,g) \leq (Ag,g) \}.
$$

For $g \in {\cal N}_2(h_2) \bigcap S^{n-1}_1(0) $, we calculate
$A(g)= h_2''(g)$ and define the set of matrices
$$
Q_2(\bar f)=\overline{\bco}\{B[n \times n] \vl B=\bar h_2''(g)
\,\,\, \forall g \in {\cal N}_2(h_2) \bigcap S^{n-1}_1(0) \},
$$
where $\bar h_2(g)=\bar f''(x_0,g,g)$. Obviously, $ \bar
h_2(g)=h_2(g)+M$ for $g \in S_1^{n-1}(0)$. The exhaustive set of
the  upper approximations of the second-order $E^*_2(\bar f)$ for
the function $\bar f(\cdot)$ at the point $x_0$ consists of
functions of the form $\frac{1}{2}(Cg,g)$. We will determine
$E^*_2(\bar f)$ if we know the set of matrices $C[ n \times n ]$
from which quadratic functions are constructed. The set
$E^*_2(\bar f)$ consists of matrices $C_{2g}[n \times n] $: \be
E^*_2(\bar f)=\{ C_{2g}[n \times n] \vl C_{2g}=P_g(A(g)+M \, I[n
\times n ] ) \bigcap Q_2(\bar f) \,\,\, \forall g \in {\cal
N}_2(h_2) \bigcap S^{n-1}_1(0) \}. \label{exsecond11} \ee Here
$A(g)=h''_2(g)$ for $g \in {\cal N}_2(h_2) \bigcap S^{n-1}_1(0)$,
$I[n \times n]$ is the identity matrix of dimension $n \times n$
with one on the main diagonal and the remaining terms equal to
zero.

In the article \cite{proudexhaust}, it was proved that the method
of cutting hyperplanes (planes) yields an exhaustive set of convex
compact sets, from which we construct convex p.h. of the
first-order functions that form the Exhauster of the first order.
Since $(A(g)g,g)$, where $A=h''_2(g)$, can be represented as a
scalar product in the extended space, all the arguments in the
article \cite{proudexhaust} for constructing the Exhauster of the
first-order are carried over to constructing the Exhauster of the
second-order. Therefore, for symmetric matrices $W$: $W^T = W$,
$W^T$ is a transposed matrix,
$$
E^*_2(\hat f)=\{ W[n \times n]+C_{2g}[n \times n] \vl \| W[n
\times n] \| \leq M, \, C_{2g}[n \times n] \in E^*_2(\bar f),
W^T=W \}
$$
consists of the sets from which one can construct p.h. functions
of the second-order  that are the second-order upper
approximations (UA) of the function $\hat f(\cdot)$ at $x_0$.

The  Exhauster of the second-order of the function $f(\cdot)$ at
$x_0$ is equal to
$$
E^*_2(f)=\{ \big[ w+C_{1g}, W[n \times n]+C_{2g}[n \times n] \big]
\vl w \in L B^n_1(0), \, \| W[n \times n] \| \leq M, W^T = W,
$$
$$
C_{1g} \in E^*_1(\tilde f), C_{2g}[n \times n] \in E^*_2(\bar f),
\forall g \in {\cal N}_2(h_2) \bigcap S^{n-1}_1(0) \}.
$$
Here, as before,
$$
\tilde f(x)=f(x)+L \|x - x_0 \|.
$$
According to the definition of the second-order Exhauster, for any
$Q \subset E^*_2(f)$ the following relations hold:
$$
f(x+\Delta x) \leq f(x_0) + \max_{[v,A] \in Q} [ (v,\Delta
x)+\frac{1}{2}(A \Delta x, \Delta x) ] + o(\Delta x^2),
$$
where $o(\Delta x^2)=o(\| \Delta x \|^2)$, $o(\Delta x^2) / \|
\Delta x \|^2 \longrightarrow 0$ for $\Delta x \rar 0$, and
$$
f(x+\Delta x) = f(x_0) + \inf_{Q \in E^*_2(f)} \max_{[v,A] \in Q}
[ (v,\Delta x)+\frac{1}{2}(A \Delta x, \Delta x) ] + o(\Delta
x^2).
$$

%%%%%%%%%%%%%%%%%%%%%%%%%%%%%%%%%%%%%%%%%%%%%%%%%%%%%%%%%%%%%%

\section{Construction of the lower Exhauster of the second order }

Assuming continuity of $h_2(\cdot)$ and a.e. differentiability on
$B^n_1(0)$, we give a rule for constructing   lower Exhauster of
the second-order of the function $h_2(\cdot)$ at zero. The set of
differentiability points of $h_2(\cdot)$ on $B^n_1(0)$ is denoted,
as before, by ${\cal N}_2(h_2)$.

Similarly to Theorem 3 \cite{proudexhaust}, we can construct a
 lower Exhauster of the second-order for the function $f(\cdot)$ at the
point $x_0$. We define the function
$$
\underline f(x)=\hat f(x)-\frac{1}{2}(M(x-x_0), x-x_0)
$$
and the set
$$
Q_2(\underline f)=\overline{\bco}\{B[n \times n] \vl B=\underline
h_2''(g) \,\,\, \forall g \in {\cal N}_2(h_2) \bigcap S^{n-1}_1(0)
\},
$$
where
$$
\underline h_2(g)=\underline f''(x_0, g,g)=\frac{\p^2 \underline
f(x_0)}{\p g^2}.
$$
Just as in constructing the upper Exhausters, we will search for a
set of the  matrices $C[n \times n]$ from which we can construct
the  p.h of the second order functions of the form $(Cg,g)$ that
form the exhaustive set of the second-order lower approximations
of the function $\underline f(\cdot)$.

The exhaustive set $E_{*2}(f)$ of the second-order lower
approximations of the function $f(\cdot)$ at the point $x_0$ is
equal to
$$
E_{*2}( f)= \{ \big[ w+C_{1g}, W[n \times n]+C_{2g}[n \times n]
\big] \vl w \in L B^n_1(0), \, \| W[n \times n] \| \leq M, W^T =
W,
$$
$$
C_{1g} \in -E^*_1(-\underset{*}{f}), C_{2g}[n \times n] \in
-E^*_2(-\underline f), \forall g \in {\cal N}_2(h_2) \bigcap
S^{n-1}_1(0) \}.
$$
Here
$$
\underset{*}{f}(x)=f(x)- L \|x - x_0 \|.
$$
Here $E^*_2(-\underline f)$ is defined by the formula
(\ref{exsecond11}) with the function $\bar f(\cdot)$ replaced by
$-\underline f(\cdot)$.

By the definition of the second-order lower Exhauster, the
following relations hold:  for any $Q \subset E_{*2}(f)$
$$
f(x+\Delta x) \meq f(x_0) + \min_{[v,A] \in Q} [ (v,\Delta
x)+\frac{1}{2}(A \Delta x, \Delta x) ] + o(\Delta x^2),
$$
and
$$
f(x+\Delta x) = f(x_0) + \sup_{Q \in E_{*2}(f)} \min_{[v,A] \in Q}
[ (v,\Delta x)+\frac{1}{2}(A \Delta x, \Delta x) ] + o(\Delta
x^2).
$$
%%%%%%%%%%%%%%%%%%%%%%%%%%%%%%%%%%%%%%%%%%%%%%%%%%%%%%%%%%%%

%%%%%%%%%%%%%%%%%%%%%%%%%%%%%%%%%%%%%%%%%%%%%%%%%%%%%%%%%%%%%%%%%%

\section{Second-order subdifferential of the function $h_2(\cdot)$}

The  subdifferential  of the second-order $\Psi^2 f (\cdot)$ was
introduced for any Lipschitz function $f(\cdot):\mathbb{R}^n \rar
\mathbb{R}$ in \cite{pimfirstsecondsubdiff}. Let us construct the
second-order subdifferential $\Psi^2 h_2 (0)$ at zero for the p.h.
second-order function $h_2(\cdot):\mathbb{R}^n \rar \mathbb{R}$,
$h_2(g)=f''(x_0,g,g)$. We will show how it is related to the set
$$
Q_2(f)=\overline{\bco}\{B[n \times n] \vl B= h_2''(g) \,\,\,
\forall g \in {\cal N}_2(h_2) \bigcap S^{n-1}_1(0) \}.
$$
In \cite{pimTwiceContinCodif} the case was considered when
$$
h (q) = \max_{A \in \cal A} \, \frac{1}{2} (Aq, q).
$$
It was proved that the second-order subdifferential of the
function $h(\cdot)$ at zero is equal to
$$
\Psi^2 h (0)= {\cal A}.
$$
Here we will consider a more general case and show which matrices
make up the second-order subdifferential $\Psi^2 h_2(0)$ of the
function $h_2(\cdot)$ at zero.

We will assume that $h_2(g)=f''(x_0, g,g)$ is a continuous p.h.
function  of the second-order  in $g$, a.e. twice differentiable
and
$$
\| h''_2(g) \| \leq M \,\,\,\, \forall g \in S^{n-1}_1(0).
$$

Let for any $g \in S_1^{n-1}(0)$ the expansion holds
$$
f(x_0+\al g)=f(x_0)+\al f'(x_0,g)+\frac{\al^2}{2}
f''(x_0,g,g)+o(\al^2 g^2),
$$
where $\frac{o(\al^2 g^2)}{\al^2} \longrightarrow $ as $\al \rar
0^+$ uniformly in $g \in B^n_1(0)$.

\begin{thm}
The equality is true
$$
\Psi^2 h_2(0) = \Psi^2 \hat f(x_0) = Q_2(f),
$$
where $\Psi^2 \hat f(x_0)$ is the second-order subdifferential of
the function $\hat f(x)=f(x)-f(x_0)-h_1(x-x_0)$ at the point
$x_0$, $h_1(g)=f'(x_0,g)$, $\Psi^2 h_2 (0)$ is the second-order
subdifferential of the function $h_2(\cdot)$ at zero.
\end{thm}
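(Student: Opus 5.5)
The plan is to prove the two equalities separately: $\Psi^2 h_2(0)=\Psi^2\hat f(x_0)$ by locality of the second-order subdifferential of \cite{pimfirstsecondsubdiff}, and $\Psi^2 h_2(0)=Q_2(f)$ by homogeneity of $h_2(\cdot)$. The definition of $\Psi^2$ I will use is the one from \cite{pimfirstsecondsubdiff}: $\Psi^2 F(y)$ is the closed convex hull of limits of averaged second-order integral quantities (Hessians averaged along curves of $\eta(y)$, normalized by $\al^{-2}$). The sets are then determined by the second-order behaviour of $F$ near $y$ along rays $y+\al g$, $g\in S^{n-1}_1(0)$. Throughout I take for granted that $\Psi^2$ depends only on the quadratic part of the expansion, up to terms $o(\al^2 g^2)$ uniform in $g\in B^n_1(0)$.

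\textbf{First equality.} By the assumed uniform expansion,
$$
\hat f(x_0+\al g)=\frac{\al^2}{2}h_2(g)+o(\al^2 g^2)=\frac12 h_2(\al g)+o(\al^2g^2),
$$
using $h_2(\al g)=\al^2h_2(g)$. Hence $\hat f(x_0+\cdot)$ and $\frac12h_2(\cdot)$ differ by a remainder that is uniformly $o(\|\cdot\|^2)$. The averaged second-order quantities defining $\Psi^2$ are normalized by $\al^{-2}$, so the remainder contributes a term tending to zero. The limit sets therefore coincide, giving $\Psi^2\hat f(x_0)=\Psi^2 h_2(0)$ (with the factor $\frac12$ absorbed by the normalization convention of \cite{pimTwiceContinCodif}, in which $h(q)=\max_{A\in\cal A}\frac12(Aq,q)$ gives $\Psi^2h(0)=\cal A$).

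\textbf{Second equality.} This is the main obstacle. For inclusion $\Psi^2 h_2(0)\subset Q_2(f)$: at each $g\in{\cal N}_2(h_2)$ the Hessian exists, and second-order homogeneity gives $h_2''(tg)=h_2''(g)$ for $t>0$. So the Hessian is constant along rays through points of ${\cal N}_2(h_2)$. Along curves $r(0,\al,g)=\al g+o_r(\al)$ in $\eta(0)$, the averaged Hessians $\al^{-1}\int_0^\al h_2''(r(\tau))\,d\tau$ are averages of values $h_2''(r(\tau)/\|r(\tau)\|)$ at points of ${\cal N}_2(h_2)\cap S^{n-1}_1(0)$. Any limit therefore lies in the closed convex hull of such Hessians, and since $\|h_2''\|\le M$ the limits exist along subsequences. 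Hence $\Psi^2h_2(0)\subset Q_2(f)$.

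For the reverse inclusion $Q_2(f)\subset\Psi^2h_2(0)$: given $g\in{\cal N}_2(h_2)\cap S^{n-1}_1(0)$, I will choose the straight ray $r(\tau)=\tau g$. By a Fubini-type argument (as in \cite{pimnewconstr1}), it can be perturbed slightly so that the Hessian exists a.e. along it. The averaged Hessian along this ray equals $h_2''(g)$ up to an error that vanishes by continuity of the perturbation in the a.e. sense. So every generator $h_2''(g)$ lies in $\Psi^2h_2(0)$. Since $\Psi^2h_2(0)$ is closed and convex, $Q_2(f)\subset\Psi^2h_2(0)$.

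The delicate point is this reverse inclusion. A given $g$ may be a Hessian point while the nearby rays used in $\eta(0)$ pass through non-generic points. I expect to handle it by taking limits of $h_2''(g_k)$ with $g_k\to g$ along generic directions, and using closedness of $Q_2(f)$ via the overline in its definition, mirroring the first-order argument relating $Df(x_0)$ to gradients in \cite{pimnewconstr1}.
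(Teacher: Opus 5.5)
There is a genuine gap at the outset: you work with the wrong set. In this paper (formulas (\ref{exsecond13})--(\ref{exsecond16})), following \cite{pimfirstsecondsubdiff}, $\Psi^2F(x_0)$ is defined as follows. For each set-valued map $D(\cdot)\in\Xi$, one takes the limits of $\nabla^2\psi_D(x_i)$ as $x_i\to x_0$ through regions of constancy of $D$, where $\psi_D$ is the double average of $F$ over the sets $D(x)$. $\Psi^2F(x_0)$ is the convex hull of these limits over all such $D$. Averages along curves $r\in\eta(x_0)$ belong to the first-order construction $Df(x_0)$ of \cite{pimnewconstr1} and play no role in $\Psi^2$. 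So neither of your inclusions, as argued (ray-averaged Hessians of $h_2$, Fubini-perturbed rays), says anything about $\Psi^2h_2(0)$ or $\Psi^2\hat f(x_0)$.

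The paper stays inside the correct framework. It splits $D(x_0+\Delta x)$ into sectors $S_i(\Delta x)$ with vertex $x_0$. On each sector it replaces $\hat f$ by $(A(g_i)g,g)$, with $A(g_i)=h_2''(g_i)$ and $g_i\in{\cal N}_2(h_2)$. It then applies (\ref{exsecond17}) sectorwise to get $\hat\psi_D''(x)=A(g_i)+o''(\|x-x_0\|^2)$, and represents a general $\hat\psi_D''$ as a convex combination of the $A(g_i)$. Letting $x\to x_0$ while refining the sectors gives $\overline{\mbox{co}}\{h_2''(g_i)\}=Q_2(f)$. Your remark that $h_2''$ is $0$-homogeneous is what makes such a sector approximation scale-invariant, but it must be fed into $\psi_D$, not into curve averages.

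Second, the step your first equality rests on is assumed rather than proved, and in your framework it is false. The claim that $\Psi^2$ ``only sees the quadratic part modulo uniform $o(\al^2g^2)$'' is exactly the content of $\Psi^2\hat f(x_0)=\Psi^2h_2(0)$. Averages of pointwise Hessians along rays are not insensitive to such remainders. Take $R(t)=\int_0^t s\sin(s^{-1/2})\,ds$: one has $|R(t)|\le 4t^{5/2}$ and $R''\in L^1(0,1)$, yet $\al^{-1}\int_0^{\al}R''(\tau)\,d\tau=\sin(\al^{-1/2})$ has no limit. Now set $f(x)=\frac12\|x\|^2+R(\|x\|)$. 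This $f$ is Lipschitz near $0$, with $h_1\equiv0$, $h_2(g)=\|g\|^2$ and a uniform expansion. Yet your ray averages of $\nabla^2\hat f$ accumulate at $I+s\,gg^T$ for every $s\in[-1,1]$. In the paper the remainder is handled through (\ref{exsecond17}), which relies on the averaging over the sets $D(x)$ built into $\psi_D$; nothing comparable is available for curve averages of pointwise Hessians.

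Finally, your reverse inclusion is left as an expectation rather than an argument. Also, the factor $\frac12$ cannot be ``absorbed by normalization'': $\psi_D$ is linear in the function, so, granting the locality you assume, $\hat f\approx\frac12h_2$ forces $\Psi^2\hat f(x_0)=\frac12\Psi^2h_2(0)$ under any fixed convention. The paper glosses over this through its convention $h_2''(g)=A(g)$ for $h_2(g)=(A(g)g,g)$. You should flag this rather than assert it away.
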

{\bf Proof}. It is clear that
$$
f''(x_0,g,g)=\hat f''(x_0,g,g),
$$
i.e.
$$
h''_2(g)=f''(x_0,g,g)= \hat f''(x_0,g,g) = \hat h''_2(g) \,\,\,\,
\forall g \in {\cal N}_2(h_2),
$$
and therefore,
$$
Q_2(f)=Q_2(\hat f) =\overline{\bco}\{B[n \times n] \vl B= \hat
h_2''(g) \,\,\, \forall g \in {\cal N}_2(h_2) \}.
$$
If we show that
$$
\Psi^2 \hat h_2(0)= \Psi^2 \hat f (x_0)= Q_2(\hat f)
$$
and
$$
\Psi^2 \hat h_2 (0) = \Psi^2 h_2 (0),
$$ then the theorem will be proved.

For $g \in {\cal N}_2(h_2)$ \be f(x_0+\al g)=f(x_0)+\al
(a(g),g)+\frac{\al^2}{2}(A(g)g,g)+o(\al^2 g^2), \label{exsecond12}
\ee where
$$
A(g)=h''_2(g), \,\, a(g)=h_1'(g), \,\,h_1(g)=f'(x_0,g).
$$
In \cite{pimfirstsecondsubdiff}, a class $\Xi$ of multivalued
mappings (MV) $D(\cdot):\mathbb{R}^n \rar 2^{\mathbb{R}^n}$ with
convex compact images is introduced. These maps are used to
construct averaged integrals and introduce the subdifferentials of
the first and second orders. All $D(\cdot)$ contain $x_0$, and the
diameter $d(D(x))$ tends to zero as $x \rar x_0$. For each
$D(\cdot)$, the averaging functions $\varphi_D(\cdot)$ and
$\psi_D(\cdot)$ are constructed, as well as their subdifferentials
at the point $x_0$ in the form of  limits of the gradients or
matrices of the second mixed derivatives for $x \rar x_0$, which
are calculated in the regions of constancy of $D(\cdot)$. The
subdifferential of the first $\Phi f(x_0)$ and second $\Psi^2
f(x_0)$ orders is the union over all $D(\cdot) \in \Xi$ of the
limit gradients and matrices of the second mixed derivatives of
the functions $\varphi_D(\cdot)$ and $\psi_D(\cdot)$ for $x \rar
x_0$.

%%%%%%%%%%%%%%%%%%%%%%%%%%%%%%%%%%%%%%%%%%%%%%%%%%%%%%%%%%%%%%%%%%

We define the functions \cite{pimfirstsecondsubdiff} $ \varphi_{D}
(\cdot): \mathbb {R} ^ n \rar \mathbb {R} $ and $ \psi_{D}
(\cdot): \mathbb {R} ^ n \rar \mathbb {R} $ for constructing the
subdifferentials \be \varphi_{D} (x): = \frac {1} {\mu (D (x))}
\int_ {D (x)} f (x + y) d y, \label{exsecond13} \ee \be \psi_{D}
(x): = \frac {1} {\mu (D (x))} \int_ {D (x)} \varphi_D (x + y) d
y, \label{exsecond14} \ee where $\mu (D (x)) > 0$ is the Lebesgue
measure, $ D (\cdot): \mathbb {R} ^ n \rightarrow 2^{\mathbb {R} ^
n} $ is a set-valued mapping (SVM) from  $\Xi $
\cite{pimfirstsecondsubdiff}.

The function $\varphi_{D} (\cdot)$ is used to construct the
subdifferential of the first-order, and the function $\psi_{D}
(\cdot)$ is used to construct the  subdifferential of the
second-order.

We define a set of matrices \be \p ^ 2 \psi_D (x_0) = \mbox{co}
\{A \in \mathbb {R} ^ {n \times n} \vl A = \lim_ {x_i \rar x_0}
\nabla^2 \psi_ {D} (x_i) \}, \label{exsecond15} \ee where the
points $x_i $ belong to the regions of constancy of the SVM $ D
(\cdot) \in \Xi $, $\nabla^2 \psi_ {D} (\cdot)$ is a matrix of the
second mixed derivatives $ \psi_ {D} (\cdot) $.

We define a SVM $ \Psi^2 f (\cdot): \mathbb {R}^ n \rar 2 ^
{\mathbb {R} ^ {n \times n}} $ with images \be \Psi ^ 2 f (x_0) =
\mbox{co} \, \bigcup_ {D (\cdot)} \, \p ^ 2 \psi_D (x_0),
\label{exsecond16} \ee where the union is taken over all SVMs $ D
(\cdot) \in \Xi $. We call the set $ \Psi^2 f (x_0) $  the {\em
second-order subdifferential} of the function $ f (\cdot) $ at the
point $ x_0 $ \cite{pimfirstsecondsubdiff}.

When calculating $\psi_{D}(\cdot)$, $D(x_0+\Delta x)$ can be
divided into sectors $S_i(\Delta x)$ with a common vertex $x_0$.
In each sector $S_i(\Delta x)$ the function $h_2(\cdot)$ is
approximated by the function $(A(g_i)g,g)$, where
$A(g_i)=h_2''(g_i)$, $g_i \in {\cal N}_2(h_2)$, $g_i \in
S_i(\Delta x)$.

It was proved in the article \cite{pimfirstsecondsubdiff} that for
any twice-differentiable function $f(\cdot)$ at $x_0$ for any
$D(\cdot) \in \Xi$ \be \psi_D''(x)=f''(x_0)+o''(\|x-x_0\|^2),
\label{exsecond17} \ee where $o(\cdot)$ is a second-order
infinitesimal function with respect to $\| x-x_0\|$ in a
neighborhood of $x_0$. From this, from (\ref{exsecond15}) and
(\ref{exsecond16}), we obtain the equality
$$
\Psi^2 f(x_0)=\{ f''(x_0) \}.
$$
In our case, instead of the function $f(\cdot)$, which is not
twice continuously differentiable at the point $x_0$, we consider
the function $\hat f(\cdot)$. In each sector $S_i(\Delta x)$,
containing the vector $g_i$, the function $\hat f(\cdot)$ is
approximated by the function
$$
(A(g_i) g, g)+o( \Delta x ^2).
$$
The function $\hat \psi_D(\cdot)$, calculated for $D(x+\Delta
x)=S_i(\Delta x)$, is equal, according to (\ref{exsecond17}), to
$$
\hat \psi''_D(x)=A(g_i)+o''(\| x-x_0 \|^2) \,\,\, \forall x \in
S_i(\Delta x).
$$
Here $o(\| x-x_0 \|^2) $ is an uniformly infinitesimal function in
all sectors, since in the expansion (\ref{exsecond12}), by the
assumption, $o(\al^2 g^2)$ is a second-order uniformly
infinitesimal function  in $g \in B^n_1(0) \bigcap S_i(\Delta x)$.

When calculating $\hat \psi''_D (\cdot)$ for an arbitrary
$D(\cdot) \in \Xi$, the second derivative can be represented as
the convex hull of the matrices $A(g_i)$, $g_i \in S_i(\Delta x)$,
$g_i \in {\cal N}_2 (h_2)$, i.e.
$$
\hat \psi''_D(x) \simeq \sum_{i=1}^m \al_i A(g_i) +o'' ( \| x -
x_0 \|^2),
$$
where $0 \leq \al_i \leq 1$, $\sum_1^m \al_i=1$. For $x \rar x_0$
and $m \rar \infty$ we have
$$
\Psi^2 \hat f(x_0) =\overline{\bco} \{ A(g_i) \vl A(g_i)=\hat
h_2''(g_i), \,\, g_i \in {\cal N}_2(h_2)\} =
$$
$$
= \overline{\bco} \{ A(g_i) \vl A(g_i)= h_2''(g_i), \,\, g_i \in
{\cal N}_2(h_2)\}=Q_2(f)=\Psi^2 h_2 (0),
$$
since
$$
\hat h''_2(g_i)=h''_2(g_i)=\hat f''(x_0,g_i,g_i)=
f''(x_0,g_i,g_i),
$$
which is what was required. The theorem is proven. $\Box$

%%%%%%%%%%%%%%%%%%%%%%%%%%%%%%%%%%%%%%%%%%%%%%%%%%%%%%%%%%%%%%%%%%

\section{\bf Conclusion}

The paper introduces the upper (lower) second-order approximations
for an arbitrary bounded  u.s.c. or l.s.c. function. The concept
of the upper and lower Exhauster of the second order is
introduced. The theorems establishing the connection between the
upper (lower) second-order Frechet subdifferentials and the lower
(upper) second-order Exhausters, respectively, are proved. Based
on the results of the author \cite{proudexhaust} of constructing
the Exhauster of the first-order, the procedure for constructing
the Exhausters of the second-orders given.

The importance of constructing the second-order Exhausters is
explained by the fact that they enable us to write the necessary
and sufficient conditions for the second-order optimality.

The following statement is easy to prove. For the function
$f(\cdot)$ to have a minimum at $x_0$, it is necessary and
sufficient that for all $\la \in \La$ the functions
$h^+_{\la}(\cdot)$, forming the  upper Exhauster $E_2^*(f)$ of the
second-order, have a minimum at zero.

A necessary condition for the minimum of the function $f(\cdot)$
at $x_0$ is \be 0 \in \p h^+_{1\la}(0) \,\,\,\,\,\, \forall \la
\in \La. \label{exsecond18} \ee A sufficient condition for a
minimum is
$$
0 \in \mbox{int } \p h^+_{1\la}(0) \,\,\,\,\,\, \forall \la \in
\La.
$$
If the condition (\ref{exsecond18}) is satisfied and zero belongs
to the boundary of the set $\p h^+_{1\la_0}(0)$ for $\la_0 \in
\La$, that is, for some direction $g_0 \in B^n_1(0)$ the equality
$ h^+_{1\la_0}(g_0) =0 $ holds, then a sufficient condition for a
minimum in this case is $(Ag_0,g_0) > 0$ for all
$$
A \in \{ B \vl (0, B) \in \p^2 h^+_{\la_0}(0) \}.
$$
Note that the inequality written above must hold for all $\la_0$
and $g_0$ specified above. This can be reformulated as follows:
for a point $x_0$ to be a minimum point of the function
$f(\cdot)$, it is  sufficient that for all sufficiently small
$\alpha \in (0, \al_0),$ $\al_0>0$, the following inequality
$$
h^+_{\lambda}(\al g) >0 \,\,\, \forall \al \in (0, \al_0), \,\,
\forall g \in S^{n-1}_1(0), \,\,\, \forall \lambda \in \Lambda
$$
holds.

A necessary condition for the minimum of the function $f(\cdot)$
is that for all sufficiently small $\alpha \in (0, \al_0),$
$\al_0>0$, the inequality
$$
h^+_{\lambda}(\al g) \meq 0 \,\,\, \forall \al \in (0, \al_0),
\,\, \forall g \in S^{n-1}_1(0), \,\,\, \forall \lambda \in
\Lambda
$$
holds.

We can also write  sufficient condition for the maximum of the
function $f(\cdot) $ at the point $x_0$:
$$
h^-_{\lambda}(\al g) < 0 \,\,\, \forall \al \in (0, \al_0), \,\,
\forall g \in S^{n-1}_1(0), \,\,\, \forall \lambda \in \Lambda.
$$

A necessary condition for the maximum of the function $f(\cdot)$
is that for all sufficiently small $\alpha \in (0, \al_0),$
$\al_0>0$, the inequality
$$
h^-_{\lambda}(\al g) \leq 0 \,\,\, \forall \al \in (0, \al_0),
\,\, \forall g \in S^{n-1}_1(0), \,\,\, \forall \lambda \in
\Lambda
$$
holds.

Based on the necessary and sufficient conditions for the minimum
(maximum), we can further develop numerical methods for finding
extremum points.

%%%%%%%%%%%%%%%%%%%%%%%%%%%%%%%%%%%%%%%%%%%%%%%%%%%%%%%%%%%%%%


\newpage

\begin{thebibliography}{14}

\bibitem{andramonov} Andramonov M.Yu. Calculating Quasi-Differentials and
Exhausters using values of function // Computational Methods and
Programming. 2006, Vol. 7, No. 2, pp. 190-194.

\bibitem{clarke}  Clarke F. Generalized Gradients and Applications // Trans.
Amer. Math. Soc. 1975, Vol. 205, No. 2, pp. 247-262.

\bibitem{demrub} Demyanov V.F. Rubinov, A.M. Fundamentals of Nonsmooth Analysis
and Quasi-Differential Calculus. Moscow: Nauka, 1990, pp. 432-433.

\bibitem{demroshina} Demyanov V. F., Roshchina, V. A. Generalized Subdifferentials
and Exhausters // Vladikavkaz Mathematical Journal, 2006. Vol. 8,
Issue 4, pp. 19-31.

\bibitem{demyanovnexhauster} Demyanov V.F. Exhausters of a Positively Homogeneous
Function // Optimization, Vol. 45, 1999. pp. 13-29.

\bibitem{demyanovnexhauster2} Demyanov V.F.  Exhausters and
Convexificators. New Tools in Nonsmooth Analysis //
Quasi-Differentiability and Related Topics. Dordrecht: Kluwer
Academic Publishers, 2001.

\bibitem{ioffepenot} Ioffe A.D., Penot J.-P., Limiting Subhessians, Limiting
Subjects and Their Calculus // Transactions of the American
Mathematical Society. 349(2), 789-807 (1997)

\bibitem{kruger} Kruger A. Ya. On Frechet subdifferentials // J. of Math.
Sciences.N.Y., 2003. V. 116, No. 3. P. 3325-3358.

\bibitem{pshenichnyi} Pshenichnyi B.N. Convex Analysis and Extremal Problems. Moscow:
Nauka, 1980, 320 p.

\bibitem{penot} Penot J.P. Subhessians, Superhessians, and Conjugation // Nonlinear
Anal. 23, 689-702 (1994)

\bibitem{pimnewconstr1} Proudnikov I.M. New Constructions for Local Approximation of
Lipschitz Functions. I, Nonlinear Analysis, Vol. 53, No. 3, 2003,
pp. 373-390.

\bibitem{proudexhaust} Prudnikov I.M.  A Method for constructing an
Exhaustive set of upper onvex approximations. // Vestn. St.
Petersburg University. Ser. 10. 2013. Issue 1. pp. 37-51.

\bibitem{pimfirstsecondsubdiff} Prudnikov I.M. The subdifferentials
of the first and second orders for Lipschitz functions // J. of
Optimization Theory and Application. 171(3). 906-930. (2016)

\bibitem{pimTwiceContinCodif} Prudnikov I.M., Duxbury N.S. Conditions
on Twice Continuous Codifferentiability of Quasidifferentiable
Functions // J. of Optimization Theory and Application. 2025.
https://doi.org/10.1007/s10957-024-02549-5

\bibitem{roshchina} Roshchina V. On the relationship between the Frechet
subdifferential and upper exhausters // International Workshop on
Optimization: Theory and Algorithms. 19-22 August 2006.
Zhangjiajie, Hunan, China.



\end{thebibliography}
\end{document}